\definecolor{todo}{rgb}{1,0,0}
\definecolor{conditional}{rgb}{0,1,0}
\definecolor{e-mail}{rgb}{0,.40,.80}
\definecolor{reference}{rgb}{.20,.60,.22}
\definecolor{mrnumber}{rgb}{.80,.40,0}
\definecolor{citation}{rgb}{0,.40,.80}
\theoremstyle{plain}
\newtheorem{theorem}{Theorem}[section]
\newtheorem*{theorem*}{Theorem}
\newtheorem{lemma}[theorem]{Lemma}
\newtheorem{proposition}[theorem]{Proposition}
\newtheorem*{conjecture*}{Conjecture}
\theoremstyle{definition}
\newtheorem{example}[theorem]{Example}
\newtheorem{remark}[theorem]{Remark}
\newtheorem{definition}[theorem]{Definition}
\DeclareMathOperator{\mut}{Mut}
\renewcommand{\emptyset}{\varnothing}
\newcommand{\opname}[1]{\operatorname{\mathsf{#1}}}
\renewcommand{\mod}{\opname{mod}\nolimits}
\newcommand{\add}{\opname{add}\nolimits}
\newcommand{\der}{\cd}
\newcommand{\X}{\mathbb{X}}
\newcommand{\Z}{\mathbb{Z}}
\newcommand{\N}{\mathbb{N}}
\newcommand{\D}{\mathbb{D}}
\renewcommand{\P}{\mathbb{P}}
\newcommand{\sgn}{\opname{sgn}}
\newcommand{\Hom}{\opname{Hom}}
\newcommand{\Ext}{\opname{Ext}}
\newcommand{\coh}{\opname{coh}}
\newcommand{\End}{\opname{End}}
\newcommand{\vect}{\opname{vect}}
\newcommand{\cd}{{\mathcal D}}
\newcommand{\mt}{\mathcal{T}}
\renewcommand{\hat}[1]{\widehat{#1}}
\begin{document}

\title[Existence/non-existence of MGS]{On maximal green sequence for quivers arising from weighted projective lines}\thanks{Partially supported by the National Natural Science Foundation of China (Grant No. 11971326, 12071315)}
\author{Changjian Fu}
\address{Changjian Fu\\Department of Mathematics\\SiChuan University\\610064 Chengdu\\PR China}
\email{changjianfu@scu.edu.cn}
\author{Shengfei Geng}
\address{Shengfei Geng\\Department of Mathematics\\SiChuan University\\610064 Chengdu\\PR China}
\email{genshengfei@scu.edu.cn}
\subjclass[2010]{16G10, 16E10, 18E30}
\keywords{Quiver mutation, Maximal green sequence, Weighted projective line}
\maketitle

\begin{abstract}
We investigate the existence and non-existence of maximal green sequences for quivers arising from weighted projective lines. Let $Q$ be the Gabriel quiver of the endomorphism algebra of a basic cluster-tilting object in the cluster category $\mathcal{C}_\X$ of a weighted projective line $\X$. It is proved that there exists a quiver $Q'$ in the mutation equivalence class $\mut(Q)$ of $Q$ such that $Q'$ admits a maximal green sequence. Furthermore, there is a quiver in $\mut(Q)$ which does not admit a maximal green sequence if and only if $\X$ is of wild type.
\end{abstract}

\section{Introduction}
Maximal green sequences were introduced by Keller \cite{Keller11} in the study of refined Donaldson-Thomas invariants for quivers and implicitly by Gaiotto et al. in \cite{GMN}. They are certain sequences of quiver mutations satisfying a certain combinatorial condition. It is known that not all quivers have maximal green sequences, but they do exist for important classes of quivers.  We refer to the survey \cite{Keller19} for examples and recent progress. It is an open question to determine whether a given quiver admits a maximal green sequence or not.

 The existence of maximal green sequences yields quantum dilogarithm identities in the associated quantum torus and provides explicit formulas for Kontsevich-Soibelman's refined Donaldson-Thomas invariants (cf. \cite{Keller11}). It also has important applications in the theory of cluster algebras. In particular, Gross et al. \cite{GHKK} proved that the Fock-Goncharov conjecture about the existence of a canonical basis for cluster algebras holds when a cluster algebra has a quiver with a maximal green sequence and the cluster algebra is equal to its upper cluster algebra. It is also a sufficient condition for the existence of a generic basis in certain upper cluster algebras \cite{Qin}.
 
 Cluster-tilting theory of hereditary abelian categories  produces a large class  of important quivers, which we denote it by $\mathcal{Q}_{\text{ct}}$. Let $K$ be an algebraically closed field and $\mathcal{H}$ a hereditary abelian category over $K$ with tilting objects. The {\it cluster category} $\mathcal{C}(\mathcal{H})$ \cite{BMRRT} is defined as the orbit category of the bounded derived category $\der^b(\mathcal{H})$ with respect to the auto-equivalence $\tau^{-1}\circ \Sigma$, where $\tau$ is the Auslander-Reiten translation and $\Sigma$ is the suspension functor of $\der^b(\mathcal{H})$, respectively. The cluster category $\mathcal{C}(\mathcal{H})$ is a $2$-Calabi-Yau triangulated category with cluster-tilting objects (cf. \cite{Keller}). For each basic cluster-tilting object $T\in \mathcal{C}(\mathcal{H})$, we denote by $Q_T$ the Gabriel quiver of the endomorphism algebra $\End_{\mathcal{C}(\mathcal{H})}(T)$. Then  $\mathcal{Q}_{ct}$ consists of quivers which are isomorphic to $Q_T$ for some basic cluster-tilting object $T$ and hereditary abelian category $\mathcal{H}$.
 
  According to Happel's classification theorem \cite{Happel}, each connected hereditary abelian $K$-category with tilting objects is either derived equivalent to the path algebra $KQ$ of a finite acyclic quiver $Q$ or to the category $\coh\X$ of coherent sheaves over a weighted projective line in the sense of Geigle-Lenzing \cite{GeigleLenzing}. Therefore, $\mathcal{Q}_{\text{ct}}$ can be written as the union of two subclasses: $\mathcal{Q}_{\text{pa}}$ consists of quivers arising from path algebras and $\mathcal{Q}_{\text{wpl}}$ consists of quivers arising from weighted projective lines. Quivers in $\mathcal{Q}_{\text{pa}}$ and their associated cluster algebras were well-studied. It is natural to investigate the quiver in $\mathcal{Q}_{\text{wpl}}$ and their associated cluster algebras.
 The aim of this note is to study the existence and non-existence of maximal green sequences for quivers in $\mathcal{Q}_{\text{wpl}}$. Our main result is an existence and non-existence theorem (cf. Theorem \ref{t:main-result} ) for quivers arising from weighted projective lines. Surprisingly, the existence and non-existence theorem is compatible with the classification of weighted projective lines.
 
 The paper is structured as follows. In Section \ref{s:quiver-mutation}, we recall the definitions of quiver mutation and maximal green sequence. Quivers of finite mutation type are also discussed.  In Section \ref{s:weighted-projective-line}, we collect basic properties for weighted projective lines. It is proved that a quiver arising from  a weighted projective line $\X$ is of finite mutation type if and only if $\X$ is not of wild type (Proposition \ref{p:finite-mutation-wpl}). In Section \ref{s:proof-main-result}, we present the proof of the main result (Theorem \ref{t:main-result}).
\subsection*{Conventions}
Let $m\geq n$ be positive integers. For an integer matrix $B\in M_{m\times n}(\Z)$, we refer to the submatrix formed by the first $n$ rows of $B$ the {\it principal part} of $B$ and the submatrix formed by the last $m-n$ rows the {\it coefficient part}.

For any integer vectors $\alpha=[a_1,\cdots, a_n]^T, \beta=[b_1,\cdots, b_n]^T\in \Z^n$,  we denote by $\alpha\leq \beta$ if $a_i\leq b_i$ for $1\leq i\leq n$. This endows a partial order on $\Z^n$. For $b\in \mathbb{Z}$, let  $\sgn(b)$ be $1$, $0$, or $-1$, depending on whether $b$ is positive, zero, or negative.

\section{Preliminaries}\label{s:quiver-mutation}
\subsection{Quivers and mutation}
A quiver is an oriented graph, i.e., a quadruple $Q=(Q_0,Q_1,s,t)$ formed by a set of vertices $Q_0$, a set of arrows $Q_1$ and two maps $s$ and $t$ from $Q_1$ to $Q_0$ which send an arrow $\alpha$ respectively to its source $s(\alpha)$ and its target $t(\alpha)$.
An arrow whose source and target coincide is a {\it loop}; a {\it $2$-cycle} is a pair of distinct arrows $\alpha$ and $\beta$ such that $s(\alpha)=t(\beta)$ and $t(\alpha)=s(\beta)$.
 By convention, in the sequel, by a quiver we always mean a finite quiver without loops nor $2$-cycles.
 An {\it ice quiver} is a pair $(Q,F)$, where $Q$ is a quiver and $F$ is a subset of $Q_0$ called {\it frozen vertices}, such that there are no arrows between frozen vertices.
 The non-frozen vertices of $(Q,F)$ are {\it mutable vertices}. The {\it mutable part} of $(Q,F)$ is the full subquiver of $(Q,F)$ consisting of mutable vertices.
  \begin{definition}
 	Let $(Q,F)$ be an ice quiver and $k$ a mutable vertex. The \textit{mutation} $\mu_k(Q,F)$ of $(Q,F)$ at vertex $k$ is the ice quiver obtained from $(Q,F)$ as follows:
 	\begin{itemize}
 		\item for each subquiver $i\xrightarrow{\beta}k\xrightarrow{\alpha}j$, we add a new arrow $[\alpha\beta]:i\to j$;
 		\item we reverse all arrows with source or target $k$;
 		\item we remove the arrows in a maximal set of pairwise disjoint $2$-cycles and any arrows that created between frozen vertices.
 		\end{itemize}
 \end{definition}
When $F=\emptyset$, we also write  $\mu_k(Q)$ for $\mu_{k}(Q,\emptyset)$.

Let $(Q,F)$ be an ice quiver with non frozen vertices $\{1, \dots, n\}$ and frozen vertices $\{n+1,\dots, m\}$. Up to an isomorphism fixing the vertices, such an ice quiver is given by an $m\times n$ integer matrix $B(Q,F)$ whose coefficient $b_{ij}$ is the difference between the number of arrows from $j$ to $i$ and the number of arrows from $i$ to $j$.  
 In particular, the principal part of $B(Q,F)$ is skew-symmetric. Conversely, each $m\times n$ integer matrix $B$ with skew-symmetric principal part comes from an ice quiver.  Let $B(Q,F)=(b_{ij})$ be the associated matrix of $(Q,F)$. For any mutable vertex $k$, we denote by $\mu_k(B(Q,F))=(b_{ij}')$ the matrix associated to the ice quiver $\mu_k(Q,F)$, then
\[b_{ij}'=\begin{cases}-b_{ij}&\text{if $i=k$ or $j=k$;}\\
b_{ij}+\sgn(b_{ik})\max(0,b_{ik}b_{kj})&\text{else.}
\end{cases}
\]
This is the {\it matrix mutation rule} introduced by Fomin and Zelevinsky \cite{FZ}.
It is clear that $\mu_k(B(Q,F))=B(\mu_k(Q,F))$.

Mutation at a fixed vertex is an involution. Two ice quivers are {\it mutation-equivalent} if they are linked by a finite sequence of mutations. We will denote by $\mut(Q,F)$ the set of all quivers that can be obtained from $(Q,F)$ by a finite sequence of mutations. We write $\mut(Q):=\mut(Q, \emptyset)$.

\subsection{Maximal green sequence}
\begin{definition}
	Let $Q$ be a quiver. The {\it framed quiver} $\hat{Q}$ of $Q$ is the ice quiver $(\hat{Q}, Q_0^*)$ such that:
\[Q_0^*=\{i^*~|~i\in Q_0\},~ \hat{Q}_0=Q_0\sqcup Q_0^*, ~\hat{Q}_1=Q_1\sqcup \{i\to i^*~|~i\in Q_0\}.
\]
The {\it coframed quiver $\check{Q}$} is the ice quiver $(\check{Q}, Q_0^*)$ such that:
\[Q_0^*=\{i^*~|~i\in Q_0\},~ \check{Q}_0=Q_0\sqcup Q_0^*, ~\check{Q}_1=Q_1\sqcup \{i\gets i^*~|~i\in Q_0\}.
\]

\end{definition}
\begin{definition}
	Let $R\in \mut(\hat{Q},Q_0^\ast)$. A mutable vertex $k\in R_0$ is {\it green} if $\{j^*\in Q_0^*~|~\exists~  j^*\to k\in R_1\}=\emptyset$. It is {\it red} if $\{j^*\in Q_0^*~|~\exists~ j^*\gets k\in R_1\}=\emptyset$. 
\end{definition}
We have the following {\it sign-coherence} property.
\begin{theorem}\cite[Theorem 1.7]{DerksenWeymanZelevinsky}\label{t:dwz}
Every mutable vertex of $R\in \mut(\hat{Q},Q_0^\ast)$ is either green or red. 
\end{theorem}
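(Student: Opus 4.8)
The plan is to reformulate the statement as a sign-coherence property of $c$-vectors and then prove the latter. Encode $(\hat{Q},Q_0^*)$ by its $2n\times n$ exchange matrix, whose principal part is the skew-symmetric matrix $B$ of $Q$ and whose coefficient part is the identity $I_n$ (reflecting the single arrow $i\to i^*$ for each $i$, which contributes $b_{(n+i),i}=1$ and $b_{(n+j),i}=0$ for $j\neq i$). Under the matrix mutation rule recalled above, the coefficient part of any $R\in\mut(\hat{Q},Q_0^*)$ has columns $\mathbf{c}_1,\dots,\mathbf{c}_n$, the $c$-vectors, where $(\mathbf{c}_k)_j$ equals the number of arrows $k\to j^*$ minus the number of arrows $j^*\to k$ in $R$. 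Unwinding the definitions, the absence of any arrow $j^*\to k$ (for all $j$) is exactly the statement $\mathbf{c}_k\geq 0$, so $k$ is green iff $\mathbf{c}_k\geq 0$; dually $k$ is red iff $\mathbf{c}_k\leq 0$. Hence the theorem is equivalent to the assertion that every $c$-vector is \emph{sign-coherent}, i.e. has all entries $\geq 0$ or all entries $\leq 0$.

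To prove sign-coherence I would pass to the representation theory of quivers with potentials. The naive induction on mutation length fails, since matrix mutation can a priori mix signs within a column; the point of the representation-theoretic lift is to rule this out. First attach to $Q$ a nondegenerate potential $W$, so that the quiver-with-potential mutation of Derksen--Weyman--Zelevinsky lifts each quiver mutation and remains well defined along the entire sequence. To each mutation sequence one associates a decorated representation of the mutated quiver with potential, and the combinatorial data recorded by the frozen part of $R$ is identified, up to the standard sign conventions, with the weight (dimension) vectors attached to these decorated representations; concretely $\mathbf{c}_k$ becomes the dimension vector of the relevant representation produced by the mutation sequence, or its negative.

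The heart of the argument, and the main obstacle, is to show that each such vector is the dimension vector of a genuine representation, or the negative of one, with no mixing of signs. This is where the representation theory does the real work: along a quiver-with-potential mutation the newly created object is controlled by the $E$-invariant, whose non-negativity forces, at the mutated vertex, exactly one of the two admissible sign patterns; an induction carried out at the level of decorated representations then propagates sign-coherence from the base case $R=(\hat{Q},Q_0^*)$, where the coefficient part $I_n$ is manifestly non-negative. Establishing the compatibility of quiver-with-potential mutation with matrix mutation precisely enough to make this identification, and verifying that the decorated representations stay positive so that no negative part interferes, is the technical core.

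Finally, I would note that the same conclusion can be reached by independent routes: via the tropical duality between $C$-matrices and $G$-matrices, via consistent cluster scattering diagrams in which the $c$-vectors appear as primitive normal vectors to walls, or via $2$-Calabi--Yau categorification where they arise as dimension vectors of rigid indecomposables. I favour the quiver-with-potential approach here because it is the most self-contained and is exactly the framework in which the cited sign-coherence result was first established.
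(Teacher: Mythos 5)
Your proposal is correct and follows essentially the same route as the paper: the paper gives no proof of this statement but imports it from \cite{DerksenWeymanZelevinsky}, and its own Remark following the theorem performs exactly your first step, restating green/red in terms of sign-coherence of the columns of the coefficient part of $B(R,Q_0^\ast)$. Your outline of the quiver-with-potential proof (nondegenerate potential, decorated representations, identification of $c$-vectors with dimension vectors up to sign, and the $E$-invariant controlling the sign at the mutated vertex) is an accurate summary of the argument in the cited source, so there is nothing to contrast beyond noting that the technical core you flag is carried out there rather than here.
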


\begin{remark}
 A non-zero integer vector $c\in \Z^n$ is {\it sign-coherent} if $c\leq 0$ or $0\leq c$.  
  Let $Q$ be a quiver with vertex set $\{1,\dots, n\}$.
For $R\in \mut(\hat{Q},Q_0^\ast)$, recall that $B(R,Q_0^\ast)$ is the associated $2n\times n$ integer matrix. Theorem~\ref{t:dwz} can be restated as follows:
each column vector of the coefficient part of $B(R,Q_0^\ast)$ is sign-coherent.
\end{remark}

\begin{definition}
	A {\it green sequence} for a quiver $Q$ is a sequence ${\bf \mathfrak{i}}=(i_1,\dots, i_l)$ of vertices  of $Q$ such that for any $1\leq k\leq l$, the vertex $i_k$ is green in $\mu_{i_{k-1}}\circ\cdots\circ \mu_{i_1}(\hat{Q},Q_0^\ast)$. The green sequence ${\bf \mathfrak{i}}$ is {\it maximal} if every mutable vertex in $\mu_{i_{l}}\circ\cdots\circ \mu_{i_1}(\hat{Q},Q_0^\ast)$ is red. We will simply denote the composition $\mu_{i_{l}}\circ\cdots\circ \mu_{i_1}$ by $\mu_{\bf \mathfrak{i}}$. A {\it green-to-red sequence} is a sequence ${\bf \mathfrak{i}}$ of vertices of $Q$ such that every mutable vertex in $\mu_{\bf \mathfrak{i}}(\hat{Q},Q_0^\ast)$ is red. 
\end{definition}

\begin{proposition}\cite[Proposition 2.10]{BDP}~\label{p:bdp}
Suppose that $Q$ admits a green-to-red sequence ${\bf \mathfrak{i}}$. Then there is a unique isomorphism $\mu_{\bf \mathfrak{i}}(\hat{Q},Q_0^\ast)\xrightarrow{\sim}\check{Q}$ fixing the frozen vertices and sending a non frozen vertex $i$ to $\sigma(i)$ for a unique permutation $\sigma$ of the vertices of $Q$.
\end{proposition}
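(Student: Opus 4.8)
The plan is to translate the hypothesis into the language of $C$-matrices and then to pin down the terminal seed by combining the sign-coherence of $c$-vectors (Theorem~\ref{t:dwz}) with the sign-coherence of $g$-vectors and tropical duality.

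First I would set $n=|Q_0|$ and write $B=B(Q)$ for the skew-symmetric principal part, so that the framed quiver has associated matrix $B(\hat{Q},Q_0^\ast)=\left[\begin{smallmatrix}B\\ I_n\end{smallmatrix}\right]$, with coefficient part the identity. Writing $\mu_{\mathfrak i}(B(\hat{Q},Q_0^\ast))=\left[\begin{smallmatrix}B'\\ C\end{smallmatrix}\right]$, the lower block $C$ is the $C$-matrix of the seed, and its columns $c_1,\dots,c_n$ are the $c$-vectors. Because the mutations in $\mathfrak i$ never create a $2$-cycle between a mutable and a frozen vertex, a mutable vertex $k$ is red exactly when $c_k\le 0$; hence the green-to-red hypothesis says precisely that $C\le 0$. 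The proposition thereby reduces to the two assertions $C=-P_\sigma$ and $B'=P_\sigma^{T}BP_\sigma$ for a permutation matrix $P_\sigma$, the former matching the coefficient part $-I_n$ of $\check{Q}$ and the latter its principal part, both after relabelling the mutable vertex $i$ by $\sigma(i)$.

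The crux is the implication $C\le 0\Rightarrow C=-P_\sigma$, and here column sign-coherence alone does not suffice, since a unimodular non-negative integer matrix need not be a permutation matrix. I would therefore bring in the $G$-matrix $G$ of the same seed, with columns $g_1,\dots,g_n$, and invoke two standard facts for patterns with principal coefficients: the $g$-vectors are sign-coherent, and tropical duality gives $C^{T}G=I_n$, so that $\langle c_i,g_j\rangle=\delta_{ij}$ and, in particular, $C,G\in \GL_n(\Z)$. Since $c_i\le 0$ while $\langle c_i,g_i\rangle=1>0$, sign-coherence forces $g_i\le 0$ for all $i$. For $i\ne j$ the vanishing $\langle c_i,g_j\rangle=0$ is then a sum of products of non-positive numbers, so $c_{ki}\,g_{kj}=0$ for every $k$ and every $i\ne j$. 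Fixing a row $k$: if $c_{ki}\ne 0$ then $g_{kj}=0$ for all $j\ne i$, so two non-zero entries in row $k$ of $C$ would make row $k$ of $G$ vanish, contradicting $G\in\GL_n(\Z)$. Thus each row of $C$ has a single non-zero entry; invertibility places these in distinct columns, and $C\le 0$ with $|\det C|=1$ forces each to be $-1$. Hence $C=-P_\sigma$ for a unique permutation $\sigma$.

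It remains to recover the principal part and assemble the isomorphism. From $C=-P_\sigma$ one computes $G=(C^{-1})^{T}=-P_\sigma$, and the congruence $B'=G^{T}BG$ --- which expresses the exchange matrix of the seed as the Gram matrix of the skew form $B$ in the $g$-vector basis, and follows from tropical duality --- gives $B'=P_\sigma^{T}BP_\sigma$, a permutation conjugate of $B$, in particular skew-symmetric. Therefore both blocks of $\mu_{\mathfrak i}(B(\hat{Q},Q_0^\ast))$ coincide with those of $B(\check{Q},Q_0^\ast)$ after relabelling the mutable vertices through $\sigma$, and fixing the frozen vertices produces the isomorphism $\mu_{\mathfrak i}(\hat{Q},Q_0^\ast)\iso\check{Q}$. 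For uniqueness, $C=-P_\sigma$ says each mutable vertex $i$ of $\mu_{\mathfrak i}(\hat{Q},Q_0^\ast)$ has the single frozen neighbour $\sigma(i)^\ast$, so any frozen-fixing isomorphism must send $i$ to the mutable vertex of $\check{Q}$ attached to $\sigma(i)^\ast$, namely $\sigma(i)$; this determines both $\sigma$ and the isomorphism. I expect the main obstacle to be the step $C\le 0\Rightarrow C=-P_\sigma$, which genuinely needs the dual $g$-vectors together with their sign-coherence, and which must quote tropical duality with the transposes arranged so that the dual-basis relation $\langle c_i,g_j\rangle=\delta_{ij}$ is available.
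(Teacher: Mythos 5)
Your proposal addresses a statement that the paper itself never proves (it is quoted verbatim from \cite[Proposition 2.10]{BDP}), so the comparison is with the standard argument, whose skeleton you have correctly reproduced: translate ``all mutable vertices red'' into $C\le 0$, show that $C\le 0$ forces $C=-P_\sigma$, then identify the principal part and prove uniqueness. The reduction to $C$-matrices, the termwise-vanishing argument, and the uniqueness paragraph are all fine, and you are right that this is where $g$-vectors must enter. However, the two ``standard facts'' on which you hang the crux are both false as stated. First, the columns of $G$-matrices (the $g$-vectors) are \emph{not} sign-coherent in general: with the paper's conventions, starting from the quiver $1\to 2$ and mutating along $(1,2,1)$ produces the $g$-vector $g_1=(1,-1)^T$. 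What is true --- and what the paper actually records from \cite{DerksenWeymanZelevinsky} --- is that the \emph{rows} of $G_t$ are sign-coherent. So your inference ``$c_i\le 0$ and $\langle c_i,g_i\rangle=1$ force $g_i\le 0$ by sign-coherence'' invokes a non-existent lemma. The repair is to run the same pairing on rows: since $G^TC=I_n$ for square matrices also gives $CG^T=I_n$, one has $\langle \mathrm{row}_i(C),\mathrm{row}_j(G)\rangle=\delta_{ij}$; as every entry of $C$ is $\le 0$, the equation $\langle \mathrm{row}_i(C),\mathrm{row}_i(G)\rangle=1>0$ forces a strictly negative entry in row $i$ of $G$, and row sign-coherence then yields $\mathrm{row}_i(G)\le 0$ for every $i$. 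This gives $G\le 0$, after which your argument that each row of $C$ carries exactly one nonzero entry, hence $C=-P_\sigma$, goes through verbatim.

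Second, the identity $B'=G^TBG$ is not a consequence of Theorem~\ref{t:tropical-duality} and is false in general: for $1\to 2\to 3$ mutated once at $2$, the new principal part has an arrow between $1$ and $3$, whereas $G^TBG$, computed with $G=\left[\begin{smallmatrix}1&1&0\\0&-1&0\\0&0&1\end{smallmatrix}\right]$, has vanishing $(1,3)$-entry. The correct statement, due to Nakanishi--Zelevinsky \cite{Nak} and not contained in the paper's Theorem~\ref{t:tropical-duality}, is $G_tB_t=B_{t_0}C_t$, equivalently (using $G_t^{-1}=C_t^T$) $B_t=C_t^TB_{t_0}C_t$; it would need its own citation. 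Fortunately it rescues your conclusion: once $C=G=-P_\sigma$ is known, $(-P_\sigma)B'=B(-P_\sigma)$ gives $B'=P_\sigma^{-1}BP_\sigma=P_\sigma^TBP_\sigma$, so the terminal ice quiver is $\check{Q}$ after the relabelling $i\mapsto\sigma(i)$, exactly as you claim. In short, the architecture of your proof is sound and matches the standard one, but both of its load-bearing citations must be replaced by their correct versions (row sign-coherence of $G_t$; the Nakanishi--Zelevinsky intertwining identity) before the argument is valid.
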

\begin{remark}~\label{r:mgs}
By definition and Proposition~\ref{p:bdp}, it is known that a sequence ${\bf \mathfrak{i}}$ is a green-to-red sequence of $Q$ if and only if the coefficient part of the matrix $B(\mu_{\bf \mathfrak{i}}(\hat{Q},Q_0^\ast))=\mu_{\bf \mathfrak{i}}(B(\hat{Q},Q_0^\ast))$ is a permutation of $-I_n$. A sequence ${\bf \mathfrak{i}}=(i_1,\dots,i_l)$ is a maximal green sequence if and only if 
\begin{itemize}
\item the $i_k$-th column vector of the coefficient part of $B(\mu_{i_{k-1}}\circ \cdots \circ \mu_{i_1}(\hat{Q},Q_0^\ast))$ is positive for $1\leq k\leq l$;
\item the coefficient part of the matrix $B(\mu_{\bf \mathfrak{i}}(\hat{Q},Q_0^\ast))=\mu_{\bf \mathfrak{i}}(B(\hat{Q},Q_0^\ast))$ is a permutation of $-I_n$.
\end{itemize}
\end{remark}
By definition, all maximal green sequences are green-to-red sequences. There are quivers for which a maximal green sequence does not exist, but a green-to-red sequence does. Furthermore, there are quivers for which no green-to-red sequence exists. 
\begin{example}
Let  $a, b, c$ be non negative integers, denote by $Q_{a,b,c}$ the quiver with three vertices $1, 2, 3$ and $a$ arrows from $1$ to $2$, $b$ arrows from $2$ to $3$ and $c$ arrows from $3$ to $1$. It is known that $Q_{2,2,2}$ does not admit a green-to-red sequence. Furthermore, Muller~\cite[Theorem 12]{Muller} proved
 that $Q_{a,b,c}$ does not admit maximal green sequences whenever $a,b,c\geq 2$. 
\end{example}

\begin{lemma}\cite[Corollary 19]{Muller}
	If a quiver $Q$ admits a green-to-red sequence, then any quiver mutation-equivalent to $Q$ also admits a green-to-red sequence.
\end{lemma}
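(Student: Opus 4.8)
The plan is to reduce the statement to a single mutation and then transport a green-to-red sequence across it, using Proposition~\ref{p:bdp} to organize the bookkeeping; the genuine difficulty will be controlling the coefficient (frozen) part throughout. First I would reduce to $Q' = \mu_k(Q)$ for a single mutable vertex $k$: every quiver in $\mut(Q)$ is reached from $Q$ by finitely many single mutations, and since mutation at a fixed vertex is an involution it suffices to produce a green-to-red sequence for $\mu_k(Q)$ from one for $Q$. I work throughout with the reformulation of Remark~\ref{r:mgs}: a sequence $\mathfrak i$ is green-to-red for a quiver exactly when the coefficient part of $\mu_{\mathfrak i}(B(\hat Q, Q_0^\ast))$ is a permutation of $-I_n$, equivalently (Proposition~\ref{p:bdp}) when $\mu_{\mathfrak i}(\hat Q, Q_0^\ast) \cong \check Q$ via an isomorphism fixing the frozen vertices and inducing a permutation $\sigma$ of the mutable vertices. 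A first consequence I would record is that a green-to-red sequence returns the mutable part to itself up to relabelling: comparing mutable parts in Proposition~\ref{p:bdp} gives $\mu_{\mathfrak i}(Q) \cong Q$ via $\sigma$.

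Next I would write down the candidate sequence for $Q'$. The naive guess $(k, \mathfrak i)$ applied to $(\hat{Q'}, Q_0^\ast)$ fails: after $\mu_k$ the mutable part is $\mu_k(Q') = Q$, and after $\mathfrak i$ it is $\mu_{\mathfrak i}(Q) \cong Q$, whereas $\check{Q'}$ has mutable part $Q'$. This forces a closing mutation, and the corrected candidate is $\mathfrak i' = (k,\, i_1,\dots,i_l,\, \sigma(k))$: tracking mutable parts and using $\mu_{\mathfrak i}(Q)\cong_\sigma Q$ together with the relabelling identity $\mu_{\sigma(k)}\sigma = \sigma\mu_k$ shows that $\mu_{\mathfrak i'}$ brings the mutable part of $\hat{Q'}$ to $\sigma\!\cdot\!\mu_k(Q) = \sigma\!\cdot\!Q' \cong Q'$, which is at least consistent with reaching $\check{Q'}$.

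The crux, and the step I expect to be the main obstacle, is to verify that $\mathfrak i'$ actually lands in an all-red seed, i.e.\ that the coefficient part of $\mu_{\mathfrak i'}(B(\hat{Q'},Q_0^\ast))$ is a permutation of $-I_n$. The difficulty is precisely that the opening $\mu_k$ reframes the seed: $\mu_k(\hat{Q'},Q_0^\ast)$ has mutable part $Q$ but a nonstandard, though still entirely green, coefficient part, so one cannot simply invoke the green-to-red sequence of the standard framed quiver $(\hat Q, Q_0^\ast)$. I would isolate this as a reframing-naturality lemma: the extra frozen arrows created at $k^\ast$ by the opening mutation are transported by $\mu_{\mathfrak i}$, via the permutation $\sigma$, to exactly the configuration that the closing mutation $\mu_{\sigma(k)}$ cancels, leaving the coefficient columns a permutation of $-I_n$. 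I would prove this by tracking $c$-vectors along $\mathfrak i'$ and invoking the sign-coherence of Theorem~\ref{t:dwz} to pin down the sign of each column at every step, so that the only possible all-green-to-all-red transition is the intended one. Conceptually, the same statement is the assertion that admitting a green-to-red sequence is an invariant of the cluster pattern of $B(Q)$ rather than of a chosen seed: the mutation map intertwines the cluster scattering diagrams of $B(Q)$ and $B(Q')$ as in \cite{GHKK} and carries the positive and negative chambers compatibly, so reachability of the negative chamber is preserved. The $c$-vector bookkeeping above is the elementary, seed-level shadow of this picture, and is the route taken in \cite{Muller}.
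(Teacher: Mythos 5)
The paper itself does not prove this lemma --- it is imported verbatim from Muller \cite[Corollary 19]{Muller} --- so your proposal can only be measured against what a complete argument needs. Your skeleton is sound and is the standard ``conjugation'' route: reduce to $Q'=\mu_k(Q)$, take a green-to-red sequence $\mathfrak{i}=(i_1,\dots,i_l)$ for $Q$ with permutation $\sigma$ as in Proposition~\ref{p:bdp}, and test $\mathfrak{i}'=(k,i_1,\dots,i_l,\sigma(k))$ on $(\hat{Q'},Q_0^\ast)$; you also correctly locate the crux, namely that the opening $\mu_k$ reframes the seed so the hypothesis on $\mathfrak{i}$ cannot be applied verbatim. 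The gap is in your treatment of that crux: ``tracking $c$-vectors and invoking the sign-coherence of Theorem~\ref{t:dwz} to pin down the sign of each column at every step'' is not a workable argument, because sign-coherence asserts that each intermediate column \emph{has} a sign but in no way determines it, and the intermediate seeds genuinely mix green and red vertices, so there is no step-by-step forcing and no sense in which ``the only possible all-green-to-all-red transition is the intended one.'' The scattering-diagram remark is a heuristic, not a proof in the paper's toolkit.

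What actually closes the gap, using only tools available in the paper, is the following. First, by the matrix mutation rule the principal part evolves independently of the coefficient part, and each \emph{row} of the coefficient part evolves independently of all other rows, driven only by the successive principal parts. Hence after applying $(k,\mathfrak{i})$ to $(\hat{Q'},Q_0^\ast)$, every coefficient row $i\neq k$ coincides with the corresponding row of the coefficient part of $\mu_{\mathfrak{i}}(\hat{Q},Q_0^\ast)$, i.e.\ equals $-e_{\sigma(i)}^T$ by Remark~\ref{r:mgs} (same initial row $e_i^T$, same driving principal parts, since the opening $\mu_k$ leaves these rows untouched and returns the principal part to $B(Q)$); only row $k$, say $w$, is unknown. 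Now invoke sign-coherence \emph{once, at the end}. Each column $j\neq\sigma(k)$ contains the entry $-1$ in a row other than $k$, so Theorem~\ref{t:dwz} forces $w_j\le 0$ there. If moreover $w_{\sigma(k)}\le 0$, the whole matrix is non-positive and $(k,\mathfrak{i})$ is already green-to-red. If $w_{\sigma(k)}>0$, mutate once more at $\sigma(k)$: the rows $i\neq k$ are unchanged (their entry in column $\sigma(k)$ vanishes), so every column $j\neq \sigma(k)$ of the new coefficient matrix still contains a $-1$ and Theorem~\ref{t:dwz} again forces its row-$k$ entry to be $\le 0$, while column $\sigma(k)$ becomes $-w_{\sigma(k)}e_k<0$; the final seed is all red. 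This dichotomy is not cosmetic: if the first case occurred, your sequence $\mathfrak{i}'$ would \emph{fail}, since mutating an all-red seed at $\sigma(k)$ turns that vertex green (columns of $C$-matrices are nonzero by Theorem~\ref{t:tropical-duality}). Ruling that case out --- i.e.\ proving $w_{\sigma(k)}=1$ always, which is the genuine rotation lemma --- requires the Nakanishi--Zelevinsky change-of-initial-seed formula, which is strictly more than Theorems~\ref{t:dwz} and~\ref{t:tropical-duality} give. So your unconditional claim for $\mathfrak{i}'$ is unjustified as stated, although the lemma itself is rescued by the case distinction above.
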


Muller \cite{Muller} also proved that the property of having a maximal green sequence is not invariant under mutation.
  The following is useful to show the non-existence of maximal green sequence for a given quiver.
 \begin{lemma}\cite[Theorem 9 and 17]{Muller}~\label{l:muller-existence}
 	If a quiver $Q$ admits a green-to-red sequence (resp. maximal green sequence), then any full subquiver of $Q$ also admits a green-to-red sequence (resp. maximal green sequence).
 	In particular, if $Q$ has a full subquiver $Q_{a,b,c}$ with $a,b,c\geq 2$, then $Q$ does not admit a maximal green sequence.
 \end{lemma}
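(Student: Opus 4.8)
The plan is to prove the restriction statement by induction, peeling off one vertex at a time: since every full subquiver is obtained by iterated vertex deletion, it suffices to treat the case $Q'=Q|_{Q_0\setminus\{v\}}$ of deleting a single vertex $v$. Throughout I would work with the matrix reformulation of Remark~\ref{r:mgs}: a sequence $\mathfrak{i}$ is a green-to-red sequence precisely when the coefficient part of $\mu_{\mathfrak{i}}(B(\hat{Q},Q_0^\ast))$ is a permutation of $-I_n$, and it is a maximal green sequence when in addition each column mutated along the way is positive at the moment of mutation. Handling both notions in parallel, the goal is to manufacture, from a given (maximal) green sequence $\mathfrak{i}$ of $Q$, such a sequence for $Q'$.

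The first technical input is a clean commutation lemma. Index the rows of the $2n\times n$ matrix $B(\hat Q, Q_0^\ast)$ by $Q_0\sqcup Q_0^\ast$ and its columns by $Q_0$, and let $\delta_v$ denote deletion of the two rows $v, v^\ast$ together with the column $v$. A direct inspection of the matrix mutation rule shows that, for every $k\in Q_0\setminus\{v\}$,
\[
\delta_v\big(\mu_k(B)\big)=\mu_k\big(\delta_v(B)\big),
\]
because the updated entry in a surviving position $(i,j)$ depends only on the surviving entries $b_{ik}$ and $b_{kj}$. Moreover $\delta_v\big(B(\hat Q,Q_0^\ast)\big)=B(\widehat{Q'},(Q_0')^\ast)$, since removing $v$ from the framing of $Q$ returns the framing of the full subquiver $Q'$. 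Thus $\delta_v$ intertwines the mutation dynamics of $\hat Q$ and $\widehat{Q'}$ as long as one never mutates at $v$, and the natural candidate for $Q'$ is the restricted sequence $\mathfrak{i}'$ obtained from $\mathfrak{i}$ by deleting all mutations performed at $v$.

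The main obstacle is exactly the mutations of $\mathfrak{i}$ that occur at $v$. Such a step alters a surviving entry $(i,j)$ by the correction term $\sgn(b_{iv})\max(0,b_{iv}b_{vj})$, which involves the deleted column $v$ and deleted row $v$ and is therefore invisible to $\delta_v$; hence the computation of $\mathfrak{i}'$ on $\widehat{Q'}$ gradually diverges from the $\delta_v$-image of the computation of $\mathfrak{i}$ on $\hat Q$, and $\mathfrak{i}'$ need not itself be green-to-red. Concretely, the terminal permutation $\sigma$ furnished by Proposition~\ref{p:bdp} may move $v$, in which case the naive deletion $\delta_v$ of the terminal coefficient matrix fails to be a permutation of $-I_{n-1}$. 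Overcoming this is the heart of the argument: I would track the accumulated discrepancy at the level of $c$-vectors and use the sign-coherence of Theorem~\ref{t:dwz} to control the signs of the correction terms, so as to repair $\mathfrak{i}'$ into a genuine green-to-red sequence of $Q'$. Equivalently, one may pass to the wall-and-chamber picture, in which $Q'$ corresponds to the slice of the structure for $Q$ by the coordinate subspace spanned by $Q_0\setminus\{v\}$, so that a reddening path for $Q$ restricts to one for $Q'$; for the maximal green case the additional positivity of each mutated column is then read off from the same sign-coherence bookkeeping.

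Finally, the ``in particular'' clause is formal. If $Q$ contained a full subquiver of the form $Q_{a,b,c}$ with $a,b,c\ge 2$ and admitted a maximal green sequence, then by the restriction statement just proved $Q_{a,b,c}$ would itself admit a maximal green sequence, contradicting the non-existence recalled in the Example above. Hence no such $Q$ can admit a maximal green sequence.
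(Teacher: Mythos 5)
First, note that the paper offers no proof of this lemma to compare against: it is quoted directly from Muller (Theorems 9 and 17 of \cite{Muller}), so your proposal has to stand on its own as a proof, and it does not. Its correct parts are the easy ones: the reduction to deleting a single vertex $v$, the identity $\delta_v\big(B(\hat{Q},Q_0^\ast)\big)=B\big(\hat{Q'},(Q_0')^\ast\big)$, the commutation $\delta_v(\mu_k(B))=\mu_k(\delta_v(B))$ for $k\neq v$, and the formal deduction of the ``in particular'' clause from the main statement together with the non-existence of maximal green sequences for $Q_{a,b,c}$, $a,b,c\geq 2$ (Muller's Theorem 12, recalled in the Example). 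But the entire content of the lemma sits exactly at the step you leave unproved: what to do with the mutations of $\mathfrak{i}$ performed at $v$. Saying you ``would track the accumulated discrepancy at the level of $c$-vectors and use sign-coherence \dots\ so as to repair $\mathfrak{i}'$'' names the problem rather than solving it: no repair procedure is defined, no invariant is formulated, and no argument is given that a repaired sequence exists. The fallback to the wall-and-chamber picture has the same status: the claim that the structure attached to $Q'$ is the coordinate slice of the one attached to $Q$, and that a reddening path for $Q$ (which has no reason to lie over that slice) induces one for $Q'$, is a statement of essentially the same depth as the lemma itself, so invoking it merely relocates the gap.

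That this gap is not cosmetic can be seen already for $Q:1\to 2$. The sequence $(2,1,2)$ is a maximal green sequence of $Q$, and its restriction to the full subquiver $Q'$ on the vertex $2$ is $(2,2)$: this is not a green sequence of $Q'$ (after the first step the vertex is red), and it is not even green-to-red, since $\mu_2\mu_2(\hat{Q'})=\hat{Q'}$ has coefficient part $I_1$, not $-I_1$. Correspondingly, applying $\delta_1$ to the terminal matrix produced by $(2,1,2)$ yields coefficient part $[0]$ rather than $[-1]$; that is, the $\delta_v$-image of the terminal data of a maximal green sequence for $Q$ need not be terminal data of anything for $Q'$. So the witness sequence for $Q'$ (here, simply $(2)$) is in general \emph{not} obtained from the witness for $Q$ by any bookkeeping along your restricted sequence, and producing it is precisely the nontrivial part of Muller's theorems; that part is absent from the proposal.
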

 
 \begin{definition}
 Let $Q$ be a quiver and $Q', Q''$ full subquivers. We say that $Q$ is a {\it triangular extension} of $Q'$ by $Q''$ if the set of vertices of $Q$ is the disjoint union of the sets of vertices of $Q'$ and $Q''$ and there are no arrows from vertices of $Q''$ to vertices of $Q'$.
 \end{definition} 
 The following result was proved in~\cite[Theorem 4.5]{CaoLi} using Lemma~\ref{l:muller-existence}.
\begin{lemma}\cite[Theorem 4.5]{CaoLi}\label{l:CL-lemma}
If $Q$ is a triangular extension of $Q'$ and $Q''$, then $Q$ has a maximal green sequence if and only if $Q'$ and $Q''$ have maximal green sequences.
\end{lemma}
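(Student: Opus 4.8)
The plan is to prove the two implications separately. The forward direction (``only if'') is immediate: $Q'$ and $Q''$ are full subquivers of $Q$, so if $Q$ has a maximal green sequence then so do $Q'$ and $Q''$ by Lemma~\ref{l:muller-existence}. For the converse, suppose $\mathfrak{i}'$ is a maximal green sequence for $Q'$ and $\mathfrak{i}''$ one for $Q''$. I would label the vertices so that $Q'=\{1,\dots,p\}$ and $Q''=\{p+1,\dots,n\}$ and write the exchange matrix of the framed quiver in block form
\[
B(\hat{Q},Q_0^\ast)=\begin{pmatrix} B' & -C^T\\ C & B''\\ I_p & 0\\ 0 & I_q\end{pmatrix},
\]
where $B',B''$ are the skew-symmetric exchange matrices of $Q',Q''$, the last $n$ rows form the coefficient part, and the triangular hypothesis (no arrows $Q''\to Q'$) is exactly the statement that the lower-left block $C$ is entrywise $\geq 0$. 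The claim is that the concatenation $\mathfrak{i}'\ast\mathfrak{i}''$ — first mutating along $\mathfrak{i}'$ at the vertices of $Q'$, then along $\mathfrak{i}''$ at the vertices of $Q''$ — is a maximal green sequence for $Q$, and the whole argument consists of tracking which blocks stay uncontaminated during each phase.

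In the first phase I only mutate at $Q'$-vertices. Directly from the matrix mutation rule, the frozen rows $\{(p+1)^\ast,\dots,n^\ast\}$ never change, since their $Q'$-columns are zero; hence they stay $(0\mid I_q)$, so every $Q''$-column keeps a standard basis vector in its $Q''^\ast$-coordinates and, by sign-coherence (Theorem~\ref{t:dwz}), all $Q''$-vertices remain green throughout this phase. For the same reason the rows $\{(p+1)^\ast,\dots,n^\ast\}$ contribute zero to the $Q'$-columns, so each $Q'$-column's coefficient vector is of the form $\binom{\,c\,}{0}$, where $c$ evolves exactly as the corresponding $c$-vector of the standalone framed quiver $\hat{Q'}$ — because the principal block $B'$ and the rows $\{1^\ast,\dots,p^\ast\}$, restricted to the $Q'$-columns, mutate self-containedly. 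Thus $\mathfrak{i}'$ is green here and reddens all $Q'$-vertices, reducing the first phase, on each side, to the standalone framed quivers $\hat{Q'}$ and $\hat{Q''}$.

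The delicate point, and the step I expect to be the main obstacle, is controlling the coupling between the two blocks so that the data needed to run $\mathfrak{i}''$ survives the first phase: namely that the principal block $B''$ is unchanged and that the rows $\{1^\ast,\dots,p^\ast\}$ remain of the clean form $(P\mid 0)$ with $P$ a permutation of $-I_p$. By the mutation rule, at the step where a green vertex $k\in Q'$ is mutated, both are preserved provided the $k$-th column of the lower-left block is $\geq 0$ at that moment: by skew-symmetry this forces the relevant products to be $\leq 0$, so no new terms are added to the $B''$-entries nor to the entries of $\{1^\ast,\dots,p^\ast\}$ in the $Q''$-columns. To establish this sign-definiteness I would freeze the vertices of $Q''$ and view the first phase as mutations of the ice quiver on mutable part $Q'$ whose coefficient rows include the $Q''$-rows; since the initial coefficient configuration there is sign-coherent (the block $C\geq 0$, an identity block, and a zero block), uniform column sign-coherence — the extension of Theorem~\ref{t:dwz} to ice quivers with sign-coherent coefficients — keeps each such column sign-coherent, and its sign is pinned to $\geq 0$ by its $\{1^\ast,\dots,p^\ast\}$-coordinates, which are the green (hence nonnegative) $c$-vectors of $\hat{Q'}$. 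This is precisely where the triangular hypothesis $C\geq 0$ is used, and it is the only genuinely non-elementary input.

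Granting that the first phase leaves $B''$ and the frozen rows $\{1^\ast,\dots,p^\ast\}=(P\mid 0)$, $\{(p+1)^\ast,\dots,n^\ast\}=(0\mid I_q)$ intact, the second phase is elementary. Mutating at $Q''$-vertices no longer touches the rows $\{1^\ast,\dots,p^\ast\}$ (their $Q''$-columns now vanish), so each $Q'$-column keeps a negative standard basis vector in its $\{1^\ast,\dots,p^\ast\}$-coordinates and, by Theorem~\ref{t:dwz}, all $Q'$-vertices stay red; meanwhile the $Q''$-side, with $B''$ restored and the rows $\{(p+1)^\ast,\dots,n^\ast\}$ clean, evolves exactly as the framed quiver $\hat{Q''}$, so $\mathfrak{i}''$ is green and reddens all $Q''$-vertices. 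Consequently every mutation of $\mathfrak{i}'\ast\mathfrak{i}''$ is green and every mutable vertex is red at the end, which is exactly the criterion for a maximal green sequence (cf.\ Remark~\ref{r:mgs}), completing the converse.
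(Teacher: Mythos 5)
Your proof is correct, but the comparison here is one-sided: the paper does not prove this lemma at all --- it is quoted verbatim from \cite{CaoLi}, with only the remark that Cao--Li's proof uses Lemma~\ref{l:muller-existence}. What you have written is essentially a reconstruction of Cao--Li's own argument: the ``only if'' direction via Lemma~\ref{l:muller-existence}, and for the ``if'' direction the concatenation $\mathfrak{i}'\ast\mathfrak{i}''$ run on the arrow-source part $Q'$ first, the self-contained evolution of the two diagonal blocks $(Q'\cup Q'^{\ast})\times Q'$ and $(Q''\cup Q''^{\ast})\times Q''$, and the control of the coupling block by sign-coherence. The step you flag as ``the only genuinely non-elementary input'' is exactly the technical heart of \cite{CaoLi} (their uniform column sign-coherence theorem), so your proof is complete modulo a result proved in the same reference from which the lemma is quoted; that is a legitimate structure, and all the block-bookkeeping you do around it checks out. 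You also chose the correct order of concatenation: mutating $Q''$ first would put the nonpositive block $-C^{T}$ in the role of $C$ and the pinning argument would fail (already for $Q\colon 1\to 2$, the sequence $(2,1)$ is not a maximal green sequence, while $(1,2)$ is).

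One imprecision should be fixed in the write-up. You describe the key lemma as ``the extension of Theorem~\ref{t:dwz} to ice quivers with sign-coherent coefficients.'' Sign-coherence of the initial coefficient columns is \emph{not} the right hypothesis, and it is not a mutation-invariant property: for $Q\colon 1\to 2$ with coefficient rows $(-1,0)$ and $(0,1)$, both columns are sign-coherent, yet mutation at $1$ produces rows $(1,-1)$ and $(0,1)$, whose second column $(-1,1)^{T}$ is incoherent. The correct hypothesis --- and the statement Cao--Li actually prove --- is that the initial coefficient block be \emph{nonnegative}; then every mutation of it has sign-coherent columns. Your application survives because the combined block $\left(\begin{smallmatrix} C\\ I_p\\ 0\end{smallmatrix}\right)$ that you feed into the lemma is nonnegative (this, as you say, is exactly where the triangular hypothesis $C\geq 0$ enters), and the sign of the column about to be mutated is then pinned to $\geq 0$ by its $\{1^{\ast},\dots,p^{\ast}\}$-coordinates, which form a nonzero nonnegative vector since the vertex is green and $C$-matrices are invertible. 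With that rephrasing, the argument is a faithful and complete rendering of the cited proof.
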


\subsection{Tropical dualities between $c$-vectors and $g$-vectors}
Let $Q$ be a quiver with vertex set $\{1,2,\dots,n\}$. Denote by $\mathbb{T}_n$ the {\it $n$-regular tree} whose edges are labeled by the numbers $1,\dots,n$ such that the $n$ edges emanating from each vertex have different labels. We write $\xymatrix{t\ar@{-}[r]^k&t'}$ to indicate that vertices $t$ and $t'$ are linked by an edge labeled by $k$.

A {\it quiver pattern} of $(\hat{Q}, Q_0^\ast)$ is an assignment of an ice quiver $R_t\in \mut(\hat{Q},Q_0^\ast)$ to each vertex $t\in \mathbb{T}_n$ such that 
\begin{enumerate}
	\item there is a vertex $t_0\in \mathbb{T}_n$ such that $R_{t_0}=(\hat{Q}, Q_0^\ast)$;
	\item if $\xymatrix{t\ar@{-}[r]^k&t',}$  then $R_{t'}=\mu_k(R_t)$.
\end{enumerate}
Clearly, a quiver pattern of $(\hat{Q}, Q_0^\ast)$ is uniquely determined by assigning $(\hat{Q}, Q_0^\ast)$ to the vertex $t_0\in \mathbb{T}_n$ and $t_0$ is called the {\it root} vertex of the quiver pattern.

We relabel the vertex $i^\ast$ as $n+i$ for each $i\in Q_0$ and fix a quiver pattern of $(\hat{Q}, Q_0^\ast)$. In particular, for each vertex $t\in \mathbb{T}_n$, we have a $2n\times n$ integer matrix $B(R_t):=(b_{ij;t})$. The coefficient part $C_t$ of $B(R_t)$ is the {\it $C$-matrix} at $t$. Its column vectors are {\it $c$-vectors}.

Let $e_1,\dots, e_n$ be the standard basis of $\mathbb{Z}^n$. For $1\leq j\leq n$, denote by $\beta_j$ the $j$th column of the principal part of $B(R_{t_0})$.
For each vertex $t\in \mathbb{T}_n$, we also assign an integer matrix $G_t:=(g_{1;t},\dots, g_{n;t})$ by the following recursion:
\begin{enumerate}
	\item for any $1\leq i\leq n$, $g_{i;t_0}=e_i$;
	\item suppose that $G_t$ is defined and let $\xymatrix{t\ar@{-}[r]^k&t'}$ be an edge of $\mathbb{T}_n$, then 
	\[g_{i;t'}=\begin{cases}g_{i;t}&i\neq k;\\
	-g_{k;t}+\sum_{j=1}^n[b_{jk;t}]_+g_{j;t}-\sum_{j=1}^n[b_{(n+j) k;t}]_+\beta_j&i=k.
	\end{cases}
	\]
\end{enumerate}
We call $G_t$ the {\it $G$-matrix} at $t$ and its column vectors are {\it $g$-vectors}. 
\begin{proposition}\cite[Theorem 1.7]{DerksenWeymanZelevinsky}
For each vertex $t\in \mathbb{T}_n$, every row vector of $G_t$ is sign-coherent.
\end{proposition}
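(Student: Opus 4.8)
The plan is to deduce the statement from the sign-coherence of the $c$-vectors (Theorem~\ref{t:dwz}) by means of the tropical duality between the matrices $C_t$ and $G_t$. Concretely, I would run a \emph{simultaneous} induction on the distance from the root $t_0$ in $\mathbb{T}_n$, carrying along two invariants at every vertex $t$: the duality identity
\[
G_t^{T}\, C_t = I_n,
\]
and the desired row sign-coherence of $G_t$. Both are immediate at $t_0$, where $C_{t_0}=G_{t_0}=I_n$.

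For the inductive step, suppose the two invariants hold at $t$ and let $\xymatrix{t\ar@{-}[r]^k&t'}$ be an edge. First I would check that the recursion defining $g_{k;t'}$, together with the matrix-mutation rule for the coefficient part $C_t$ (whose columns are the $c$-vectors), propagates the duality identity to $t'$; this is a direct matrix computation comparing the two $[\,\cdot\,]_+$-expressions, using that the coefficient entries $b_{(n+j)k;t}$ occurring in the $g$-recursion are precisely the entries of the $k$-th column of $C_t$. The reason for keeping the duality identity along is that it rigidifies $G_t$: since only the $k$-th column of $G_t$ is altered by $\mu_k$, establishing row sign-coherence at $t'$ reduces to controlling the signs of the coordinates of the single new vector $g_{k;t'}$ and verifying that each is compatible with the (unchanged) common sign of its row of $G_t$.

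The main obstacle is exactly this coordinatewise sign control. Writing $g_{k;t'}=-g_{k;t}+\sum_j [b_{jk;t}]_+\,g_{j;t}-\sum_j [b_{(n+j)k;t}]_+\,\beta_j$, one must show that for each $i$ the $i$-th coordinate of $g_{k;t'}$ cannot carry a sign opposite to the common sign of the $i$-th row of $G_t$. Here the sign-coherence of the $k$-th $c$-vector (Theorem~\ref{t:dwz}) dictates which of the two families of summands, the $[b_{jk;t}]_+ g_{j;t}$ terms or the $[b_{(n+j)k;t}]_+\beta_j$ terms, can actually contribute, while the duality identity $G_t^{T}C_t=I_n$ is what converts this $c$-vector information into a statement about the rows of $G_t$. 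This interplay is the technical heart of the argument, and it is precisely the content of the Nakanishi--Zelevinsky tropical-duality theorem; once it is in place, the two invariants pass to $t'$ and the induction closes.

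Alternatively, one may simply invoke that the conclusion is part of the sign-coherence package of \cite{DerksenWeymanZelevinsky}, established there through decorated representations and the $E$-invariant. The inductive tropical route sketched above is how I would make the dependence on Theorem~\ref{t:dwz} explicit and keep the proof internal to the framework of this section.
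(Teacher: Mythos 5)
The paper offers no argument for this Proposition at all: it is quoted directly from \cite[Theorem 1.7]{DerksenWeymanZelevinsky}, where it is proved by representation-theoretic means (decorated representations and the $E$-invariant). So your fallback option coincides with what the paper does. The inductive route that forms the body of your proposal, however, has a genuine gap at exactly the point you call its technical heart. The invariants you carry along $\mathbb{T}_n$ --- the identity $G_t^{T}C_t=I_n$, column sign-coherence of $C_t$ (Theorem \ref{t:dwz}), and row sign-coherence of $G_t$ --- are simply not strong enough to force row sign-coherence at $t'$. At the level of linear algebra this is clear: $C=\left[\begin{smallmatrix}1&2\\1&1\end{smallmatrix}\right]$ has sign-coherent columns, while $G=(C^{T})^{-1}=\left[\begin{smallmatrix}-1&1\\2&-1\end{smallmatrix}\right]$ satisfies $G^{T}C=I_2$ and has no sign-coherent row. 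The mutation step shows where this bites: using sign-coherence of the $k$-th $c$-vector (together with the standard identity $G_tB_t=BC_t$, which lets one absorb the $\beta_j$-terms), the recursion collapses to $g_{k;t'}=-g_{k;t}+\sum_{j}[\varepsilon\, b_{jk;t}]_{+}\,g_{j;t}$ for a sign $\varepsilon\in\{\pm1\}$ determined by the $k$-th $c$-vector. Hence the $i$-th coordinate of the new column is $-g_{ik;t}$ plus a nonnegative combination of the entries $g_{ij;t}$, $j\neq k$. If row $i$ of $G_t$ is nonnegative, nothing in your carried hypotheses prevents this coordinate from being negative while some untouched entry $g_{ij;t}$, $j\neq k$, stays positive --- precisely the configuration that destroys sign-coherence of row $i$ at $t'$. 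So the induction, as formulated, does not close.

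The step you dismiss as ``precisely the content of the Nakanishi--Zelevinsky tropical-duality theorem'' is in fact strictly more than Theorem \ref{t:tropical-duality} as stated in the paper. What Nakanishi--Zelevinsky actually use to deduce row sign-coherence of $G_t$ from Theorem \ref{t:dwz} is a \emph{reciprocity} statement: $C_t^{-1}$ is itself a $C$-matrix, namely one attached to the vertex $t_0$ for the cluster pattern \emph{re-rooted at $t$} (with initial matrix $B_t$, up to the opposite-quiver convention). Combined with $G_t=(C_t^{T})^{-1}$, this identifies the rows of $G_t$ with $c$-vectors of that other rooted pattern, and sign-coherence then follows by applying Theorem \ref{t:dwz} to that pattern --- that is, to a different initial quiver --- not to the original one. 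An honest version of your induction would therefore have to carry this second duality (equivalently, run the argument simultaneously over all roots), which is substantially more work than your sketch suggests and is not contained in the identity $G_t^{T}C_t=I_n$ you propagate. Short of doing that, the clean fix is the one the paper takes: cite \cite[Theorem 1.7]{DerksenWeymanZelevinsky} and, if you want the tropical mechanism on record, cite the reciprocity results of \cite{Nak} explicitly rather than the duality identity alone.
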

The following is known as the tropical duality between $c$-vectors and $g$-vectors (cf. \cite{Nak,Keller12, Plamondon}).
\begin{theorem}\cite[Theorem 4.1]{Nak}\label{t:tropical-duality}
	For each vertex $t\in \mathbb{T}_n$, we have 
	\[G_t^{T}C_t=I_n.
	\]
\end{theorem}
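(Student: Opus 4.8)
The plan is to prove the identity $G_t^T C_t = I_n$ by induction on the distance of $t$ from the root $t_0$ in the $n$-regular tree $\T_n$, carrying along a second identity as an auxiliary inductive hypothesis. At the root one has $C_{t_0}=I_n$ (the coefficient part of the framed quiver $(\hat{Q},Q_0^\ast)$ is exactly $I_n$, since the only arrows touching the frozen vertices are $i\to i^\ast$) and $G_{t_0}=I_n$ by definition, so both the target identity $G_{t_0}^TC_{t_0}=I_n$ and the auxiliary identity
\[
B_{t_0}C_{t}=G_{t}B_{t}
\]
(with $B_t$ the principal part of $B(R_t)$) hold trivially at $t_0$.

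For the inductive step, fix an edge $\xymatrix{t\ar@{-}[r]^k&t'}$. By sign-coherence (Theorem~\ref{t:dwz}) the $k$th column of $C_t$ is sign-coherent; let $\epsilon\in\{1,-1\}$ be its sign. First I would record the effect of mutation on the $C$-matrix: applying the matrix mutation rule to the rows of the coefficient part and using sign-coherence to factor $\sgn(c_{pk;t})\max(0,c_{pk;t}b_{kj;t})=c_{pk;t}[\epsilon\, b_{kj;t}]_+$, one obtains $C_{t'}=C_tE$, where $E=E_{k,\epsilon}(B_t)$ is the $n\times n$ matrix equal to the identity except in row $k$, whose entries are $-1$ in position $k$ and $[\epsilon\, b_{kj;t}]_+$ in positions $j\neq k$. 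A direct check shows $E^2=I_n$.

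The heart of the argument is to show that the recursion defining the $g$-vectors is equivalent to $G_{t'}=G_tE^T$; granting this, the target identity at $t'$ follows immediately, since $G_{t'}^TC_{t'}=E\,(G_t^TC_t)\,E=E^2=I_n$ using the inductive hypothesis and $E^2=I_n$. Comparing the $k$th column of $G_tE^T$, namely $-g_{k;t}+\sum_{l\neq k}[\epsilon\, b_{kl;t}]_+g_{l;t}$, with the defining recursion $g_{k;t'}=-g_{k;t}+\sum_l[b_{lk;t}]_+g_{l;t}-\sum_l[c_{lk;t}]_+\beta_l$ and invoking skew-symmetry $b_{kl;t}=-b_{lk;t}$, the comparison splits by the sign $\epsilon$. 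When $\epsilon=-1$ the two $[b]_+$–sums cancel and every $[c_{lk;t}]_+$ vanishes by sign-coherence, so the expressions agree automatically. When $\epsilon=+1$ the difference of the $[b]_+$–sums collapses to $\sum_l b_{lk;t}g_{l;t}=(G_tB_t)_{\bullet k}$, while $\sum_l[c_{lk;t}]_+\beta_l=\sum_l c_{lk;t}\beta_l=(B_{t_0}C_t)_{\bullet k}$; these two columns coincide precisely by the auxiliary identity at $t$. This is the step where the $\beta$-correction term in the $g$-vector recursion is absorbed, and it is the main obstacle: it is the reason one cannot establish $G_t^TC_t=I_n$ in isolation but must couple it to the second identity.

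It remains to propagate the auxiliary identity. Using $C_{t'}=C_tE$, $G_{t'}=G_tE^T$ and the invertibility of $G_t$ (guaranteed by the target identity at $t$), the statement $B_{t_0}C_{t'}=G_{t'}B_{t'}$ reduces, via $B_{t_0}C_t=G_tB_t$ at $t$, to the purely matrix-theoretic claim that mutation of the skew-symmetric matrix $B_t$ is realised by congruence, $\mu_k(B_t)=E^TB_tE$, for each value of $\epsilon$. I would verify this last identity by a direct entrywise computation, once more using skew-symmetry so that the congruences for $\epsilon=+1$ and $\epsilon=-1$ both reproduce the single mutation rule. This closes the simultaneous induction and yields $G_t^TC_t=I_n$ for all $t\in\T_n$.
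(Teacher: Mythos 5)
Your proof is correct, and every step checks out under the paper's conventions: the base case $C_{t_0}=G_{t_0}=I_n$, the factorization $C_{t'}=C_tE_{k,\epsilon}$ via sign-coherence, the equivalence of the $g$-vector recursion with $G_{t'}=G_tE_{k,\epsilon}^T$ (split by the sign $\epsilon$, with the $\epsilon=+1$ case absorbed by the auxiliary identity $B_{t_0}C_t=G_tB_t$), the involutivity $E_{k,\epsilon}^2=I_n$, and the congruence realization $\mu_k(B_t)=E_{k,\epsilon}^TB_tE_{k,\epsilon}$ all hold by direct computation, and the invertibility of $C_t$ supplied by the inductive hypothesis guarantees the sign $\epsilon$ is well defined. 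Note that the paper itself offers no proof of this statement — it is imported by citation from \cite{Nak} — and your simultaneous-induction argument is essentially the standard proof of tropical duality from that literature (Nakanishi and Nakanishi--Zelevinsky), so there is no divergence to report.
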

\subsection{Finite mutation type}
A quiver $Q$ is of {\it  finite  mutation type} if $\mut(Q)$ is a finite set. 
Quivers of  finite mutation type  have been classified in \cite{FST}. Here, we only recall the following.
\begin{lemma}\label{l:finite-mutatin-type}
\begin{itemize}
\item[(1)] Every quiver with two vertices is of  finite mutation type.
\item[(2)] If $Q$ is acyclic with at least three vertices, then $Q$ is of  finite  mutation type if and only if $Q$ is of Dynkin type or extended Dynkin type.
\item[(3)] Each quiver in Figure \ref{f:quiver-tubular-type} is of  finite mutation  type.
\end{itemize}
\end{lemma}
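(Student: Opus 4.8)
The plan is to treat the three items separately, leaning on the classification of finite mutation type quivers in \cite{FST} while carrying out the short computations directly.

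For (1) I would argue straight from the mutation rule. If $Q$ has vertex set $\{1,2\}$, its matrix is $\begin{pmatrix} 0 & b \\ -b & 0 \end{pmatrix}$ for some $b\in\Z$; there are no length-two paths through a single vertex, so mutating at either vertex merely reverses the arrows, i.e. changes the sign of $b$. Hence $\mut(Q)$ has at most two elements and $Q$ is of finite mutation type.

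For (2) I would split into the two directions. For the ``if'' direction: when $Q$ is of Dynkin type the associated cluster algebra is of finite type, so there are only finitely many seeds and hence finitely many quivers in $\mut(Q)$; when $Q$ is of extended Dynkin type, finiteness of $\mut(Q)$ is part of the classification \cite{FST} (the types $\tilde A,\tilde D$ are of surface type and $\tilde E_6,\tilde E_7,\tilde E_8$ occur in the exceptional list). For the ``only if'' direction I would use the contrapositive together with the heredity property that finite mutation type passes to full subquivers: if one mutates $Q$ only at vertices of a full subquiver $Q|_I$, then for $i,j\in I$ the entry $b_{ij}$ transforms using only $b_{ik},b_{kj}$ with $k\in I$, so the induced evolution of $Q|_I$ is exactly its own mutation; consequently, if $Q|_I$ has infinite mutation class then so does $Q$. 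It then suffices to observe that a connected acyclic quiver with at least three vertices that is neither Dynkin nor extended Dynkin contains an acyclic full subquiver of indefinite (wild) type, and to check that each of the finitely many minimal such quivers has infinite mutation class. Equivalently, one may simply read off from \cite{FST} that the acyclic members of the finite mutation type classification with at least three vertices are precisely those of Dynkin and extended Dynkin type.

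For (3) I would identify each quiver in Figure \ref{f:quiver-tubular-type} with the tubular (canonical) quiver of weight type $(2,2,2,2)$, $(3,3,3)$, $(2,4,4)$, $(2,3,6)$, and confirm finiteness either by computing its finite mutation class explicitly or by matching it with the finite mutation type quivers $D_4^{(1,1)}$, $E_6^{(1,1)}$, $E_7^{(1,1)}$, $E_8^{(1,1)}$ appearing in \cite{FST}. The main obstacle is the ``only if'' part of (2), namely ruling out every wild acyclic quiver; the heredity property above is what makes this tractable, reducing it to a finite list of minimal wild quivers, for each of which one exhibits an explicit mutation sequence producing a full subquiver $Q_{a,b,c}$ with unboundedly growing weights $a,b,c$, forcing $\mut(Q)$ to be infinite.
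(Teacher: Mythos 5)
Your handling of (1) and (3) matches the paper: the paper declares (1) obvious (your two-line computation is exactly that observation), and for (3) it verifies finiteness by direct computation with Keller's MutationApp, citing \cite[Theorem 6.1]{FST} as an alternative --- the same two options you offer. For (2), the paper simply cites Buan--Reiten \cite[Theorem 3.6]{BR}, which \emph{is} this statement; your fallback of reading the result off from the classification in \cite{FST} is a legitimate (if much heavier) substitute, and your argument for the ``if'' direction is fine.

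However, your primary, self-contained argument for the ``only if'' direction of (2) has a genuine gap: the reduction ``to a finite list of minimal wild quivers'' is impossible, because no such finite list exists. The statement concerns quivers with arbitrary arrow multiplicities, and for any $a,b\geq 2$ the acyclic quiver $1\xrightarrow{a}2\xrightarrow{b}3$ (meaning $a$ arrows from $1$ to $2$ and $b$ arrows from $2$ to $3$) is wild (its Tits form takes the value $3-a-b<0$ at $(1,1,1)$), while every proper full subquiver has at most two vertices and is therefore of finite mutation type by (1). Hence each member of this infinite family is ``minimal'' in your sense, and the heredity property gives no information about it; the same problem already defeats any attempt to enumerate minimal wild quivers up to isomorphism. (Even restricting to single arrows, one would need the classification of hyperbolic diagrams; with multiplicities the finiteness claim collapses outright.) What actually works --- and what \cite{BR} and \cite{FST} do, and what this paper itself does in Lemma \ref{l:non-finite-mutation-type} for the quivers $Q_{a,b,c}$ --- is a uniform growth argument: one exhibits mutation sequences along which arrow multiplicities strictly increase, which kills entire infinite families at once rather than finitely many cases. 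Your closing sentence gestures at exactly this, but as written it is attached to a reduction step that is false; without that step (or the citation to \cite{BR} or \cite{FST}) the ``only if'' direction of (2) is not proved.
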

\begin{proof}
The statement $(1)$ is obvious, $(2)$ is proved by \cite[Theorem 3.6]{BR}. For $(3)$, one can verify the finiteness by the MutationApp of Keller \cite{Kellerapp} directly (cf. also \cite[Theorem 6.1 ]{FST}).
\end{proof}

\begin{figure}
\begin{minipage}[t]{0.40\linewidth}
\centering
\[\tiny\xymatrix{D_4^{(1,1)}:&\circ\ar[r]&\circ\ar@{=>}[d] &\circ\ar[l]\\
&\circ\ar[ur]&\circ\ar[ul]\ar[ur]\ar[l]\ar[r]&\circ\ar[ul]}
\]
\end{minipage}%
\begin{minipage}[t]{0.40\linewidth}
\centering
\[\tiny\xymatrix@C=0.5cm@R=0.35cm{E_6^{(1,1)}:&&\circ\ar@{=>}[dd]&\circ\ar[r]\ar[l]&\circ\\
\circ&\circ\ar[l]\ar[ur]\\
&&\circ\ar[ul]\ar[uur]\ar[r]&\circ\ar[uul]\ar[r]&\circ
}
\]
\end{minipage}

\begin{minipage}[t]{0.45\linewidth}
\centering
\[\tiny\xymatrix@C=0.5cm@R=0.35cm{&E_7^{(1,1)}:&\circ\ar@{=>}[dd]&\circ\ar[r]\ar[l]&\circ\ar[r]&\circ\\
&\circ\ar[ur]\\
&&\circ\ar[ul]\ar[uur]\ar[r]&\circ\ar[uul]\ar[r]&\circ\ar[r]&\circ
}
\]
\end{minipage}
\begin{minipage}[t]{0.45\linewidth}
\centering
\[\tiny\xymatrix@C=0.5cm@R=0.35cm{&E_8^{(1,1)}:&\circ\ar@{=>}[dd]&\circ\ar[r]\ar[l]&\circ\\
&\circ\ar[ur]\\
&&\circ\ar[ul]\ar[uur]\ar[r]&\circ\ar[uul]\ar[r]&\circ\ar[r]&\circ\ar[r]&\circ\ar[r]&\circ
}
\]
\end{minipage}
\caption{Quivers of tubular type $D_4^{(1,1)}$, $E_6^{(1,1)}$, $E_7^{(1,1)}$ and $E_8^{(1,1)}$.}\label{f:quiver-tubular-type}
\end{figure}

\begin{lemma}\label{l:non-finite-mutation-type}
Let $2\leq c\leq b\leq a$ be integers. If  $a\geq 3$, then the quiver $Q_{a,b,c}$ is not of  finite mutation type. 
\end{lemma}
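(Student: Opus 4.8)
The plan is to show that repeated mutation of $Q_{a,b,c}$ produces cyclically oriented triangles whose largest edge multiplicity strictly increases without bound; since the maximal number of arrows between two vertices is invariant under quiver isomorphism, this forces $\mut(Q_{a,b,c})$ to contain infinitely many pairwise non-isomorphic quivers, hence to be infinite. Writing the cycle as $1\xrightarrow{a}2\xrightarrow{b}3\xrightarrow{c}1$, the associated matrix is
\[
B(Q_{a,b,c})=\begin{pmatrix}0&-a&c\\ a&0&-b\\ -c&b&0\end{pmatrix}.
\]

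First I would isolate a single mutation step. Mutating at the vertex incident to the two edges carrying $a$ and $b$ arrows (vertex $2$ in the display) and applying the matrix mutation rule, one checks directly that the opposite edge multiplicity $c$ is replaced by $ab-c$, while the $a$- and $b$-edges are merely reversed. Provided $ab>c$, the resulting three arrow sets again form an oriented cycle, so the outcome is the cyclic triangle with multiplicities $\{ab-c,\,a,\,b\}$. Thus mutation preserves the class of cyclically oriented triangles as long as the product of the two mutated edges exceeds the third edge; the routine but slightly delicate part is tracking the signs in the mutation formula to confirm that cyclic orientation (rather than an acyclic configuration) is retained.

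Next I would run the induction. Assume $2\le c\le b\le a$ and $a\ge 3$. Then $ab>c$ holds, since $ab\ge 2a>a\ge c$, so the step above applies and yields $\{ab-c,a,b\}$. The key inequality is $ab-c>a$: as $ab-c-a=a(b-1)-c$ and $b\ge 2$ gives $a(b-1)\ge a\ge c$, equality throughout would force $b=2$ and $a=c$, hence $a=b=c=2$, contradicting $a\ge 3$; therefore $ab-c>a$ strictly. The new triple $\{ab-c,a,b\}$ again has all entries $\ge 2$ and maximum $ab-c>a\ge 3$, so the hypotheses are reproduced while the maximal multiplicity has strictly increased. Iterating, the largest multiplicity grows strictly at every step, producing infinitely many distinct quivers in $\mut(Q_{a,b,c})$.

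The main obstacle is essentially bookkeeping: verifying that each mutated quiver is again a cyclically oriented triangle rather than an acyclic one, which is exactly what the condition $ab>c$ guarantees and which the hypotheses $c\le b\le a$ and $a\ge 3$ keep in force throughout the iteration. As a consistency check, one may observe that the quantity $a^2+b^2+c^2-abc$ is invariant under this mutation, and that the excluded Markov case $a=b=c=2$, where $ab-c=a$ and no growth occurs, is precisely the boundary that the hypothesis $a\ge 3$ rules out.
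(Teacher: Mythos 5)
Your proof is correct and takes essentially the same route as the paper: iterate mutations on the cyclic triangle, check that each mutation replaces one edge multiplicity by the product of the other two minus itself while keeping the orientation cyclic, and conclude that the multiplicities grow strictly, so $\mut(Q_{a,b,c})$ contains infinitely many pairwise non-isomorphic quivers. The only (immaterial) difference is the mutation sequence and bookkeeping: you always mutate at the vertex between the two largest edges, cycling through all three vertices, whereas the paper iterates $\mu_2\mu_1$, keeping the multiplicity $a$ fixed and tracking the other two via the recursion $b_t=ac_{t-1}-b_{t-1}$, $c_t=ab_t-c_{t-1}$ with the interleaved growth $0<b_1<c_1<\cdots<b_t<c_t<\cdots$.
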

\begin{proof}
For a non negative integer $t$, we set $Q^t:=(\mu_2\mu_1)^t(Q_{(a,b,c)})$ and a pair  of integers $(b_t,c_t)$ by the following recursion: 
\[b_0=b,c_0=c, b_t=ac_{t-1}-b_{t-1}, c_t=ab_t-c_{t-1}.
\]
We claim that $0<b_1<c_1\cdots<b_t<c_t<\cdots$. Indeed, it is straightforward to see that $0<b_1<c_1$. For $t\geq 2$, we have 
\[b_t-c_{t-1}=ac_{t-1}-b_{t-1}-c_{t-1}=(a-1)c_{t-1}-b_{t-1}\geq 2c_{t-1}-b_{t-1}>0
\]
and 
\[c_{t}-b_t=(a-1)b_t-c_{t-1}>0
\]
by induction.
As a consequence, $Q^t=Q_{(a,b_t,c_t)}$ and $Q_{(a,b,c)}$ is not of finite mutation type.
\end{proof}

\section{Quivers arising from Weighted projective lines}\label{s:weighted-projective-line}
\subsection{Weighted projective lines}
 Fix a positive integer $t\geq 2$.
A {\it weighted projective line} $\X=\X(\mathbf{p},\boldsymbol{\lambda})$ over $K$ is given by a weight sequence $\mathbf{p}=(p_1,\dots,p_t)$
of positive integers, and 
a  parameter sequence  $\boldsymbol{\lambda}=(\lambda_1\dots,\lambda_t)$  of pairwise distinct points of the projective line $\P_1(K)$. Let $\mathbb{L}$ be the rank one  abelian group generated by $\vec{x}_1,\ldots, \vec{x}_t$ with the relations
\[p_1\vec{x}_1=p_2\vec{x}_2=\cdots=p_t\vec{x}_t=:\vec{c},
\]
where the element $\vec{c}$ is called the {\it canonical element} of $\mathbb{L}$. Denote by \[\vec{\omega}:=(t-2)\vec{c}-\sum\limits_{i=1}^t\vec{x}_i\in \mathbb{L},\] which is called the {\it dualizing element} of $\mathbb{L}$. Each element $\vec{x}\in \mathbb{L}$ can be uniquely written into the {\it normal form}
\[\vec{x}=\sum_{i=1}^tl_i\vec{x}_i+l\vec{c}, ~\text{where~$0\leq l_i<p_i$ and $l\in \Z$.}
\] 
Let $\vec{x}=\sum_{i=1}^tl_i\vec{x}_i+l\vec{c}$ and $\vec{y}=\sum_{i=1}^tm_i\vec{x}_i+m\vec{c}\in \mathbb{L}$ be in normal form, denote by $\vec{x}\leq \vec{y}$ if $l_i\leq m_i$ for $i=1,\ldots, t$ and $l\leq m$. This defines a partial order on $\mathbb{L}$. It is known that each $\vec{x}\in \mathbb{L}$ satisfies exactly one of the two possibilities:
\[0\leq \vec{x}~\text{or}~\vec{x}\leq \vec{c}+\vec{\omega}.
\]
\subsection{The category $\coh\X$ of coherent sheaves}
Let  
\[S:=S({\bf p}, {\boldsymbol{\lambda}})=K[X_1,\cdots, X_t]/I
\] be the quotient of the polynomial ring $K[X_1,\cdots, X_t]$ by the ideal $I$ generated by $f_i=X_i^{p_i}-X_2^{p_2}+\lambda_iX_1^{p_1}$ for $3\leq i\leq t$.
 The algebra $S$ is $\mathbb{L}$-graded by setting $\deg X_i=\vec{x}_i$~for ~$i=1,\ldots, t$ and we have the decomposition of $S$ into $K$-subspace
\[S=\bigoplus_{\vec{x}\in \mathbb{L}}S_{\vec{x}}.
\]

The category $\coh\X$ of coherent sheaves over $\X$ is defined to be the quotient category
\[\coh\X:=\mod^{\mathbb{L}}S/\mod_0^{\mathbb{L}}S,
\]
where $\mod^{\mathbb{L}}S$ is the category of finitely generated $\mathbb{L}$-graded $S$-modules, while $\mod_0^{\mathbb{L}}S$ is the Serre subcategory of $\mathbb{L}$-graded $S$-modules of finite length. For each sheaf $E$ and $\vec{x}\in \mathbb{L}$, denote by $E(\vec{x})$ the  grading shift of $E$ with $\vec{x}$. The free module $S$ gives the structure  sheaf $\mathcal{O}$, and each line bundle is given by the grading shift $\mathcal{O}(\vec{x})$ for a unique element $\vec{x}\in \mathbb{L}$. Moreover, we have
\begin{eqnarray}~\label{e:hom}
\Hom_{\X}(\mathcal{O}(\vec{x}), \mathcal{O}(\vec{y}))=S_{\vec{y}-\vec{x}}~\text{for any ~$\vec{x},\vec{y}\in \mathbb{L}$.}
\end{eqnarray}

 In ~\cite{GeigleLenzing}, Geigle and Lenzing proved that $\coh\X$ is a connected hereditary abelian category with tilting objects and has Serre duality of the form
\begin{eqnarray}~\label{e:serre-duality}
\D\Ext^1_{\X}(E,F)=\Hom_{\X}(F, E(\vec{\omega}))
\end{eqnarray}
for all $E,F\in \coh\X$.
In particular, $\coh\X$ admits almost split sequences with the Auslander-Reiten translation $\tau$ given by the grading shift with $\vec{\omega}$. Recall that an object $T\in \coh \X$ is a {\it tilting object} if $\Ext^1_{\X}(T,T)=0$ and for $X\in \coh\X$ with $\Hom_{\X}(T,X)=0=\Ext^1_{\X}(T,X)$, we have that $X=0$.

 Denote by $\vect\X$ the full subcategory of $\coh\X$ consisting of vector bundles, i.e. torsion-free sheaves, and by $\coh_0\X$  the full subcategory consisting of sheaves of finite length, i.e. torsion sheaves.
 Each coherent sheaf is the direct sum of a vector bundle and a finite length sheaf. Each vector bundle has a finite filtration by line bundles and there is no nonzero morphism from $\coh_0\X$ to $\vect \X$.  We remark that $\coh\X$ does not contain nonzero projective objects. Denote by 
 \[{\bf p}_{\boldsymbol{\lambda}}:\mathbb{P}_1(k)\to \N,~ {\bf p}_{\boldsymbol{\lambda}}(\mu)=\begin{cases}
 p_i& \text{if $\mu=\lambda_i$ for some $i$,}\\ 1 &\text{else.}
 \end{cases}
 \]
 the weight function associated with $\X$. 
\begin{proposition}~\cite[Proposition 2.5]{GeigleLenzing}~\label{p:structure-finite}
 	The category $\coh_0\X$ is an exact abelian, uniserial subcategory of $\coh\X$ which is stable under Auslander-Reiten translation. The components of the Auslander-Reiten quiver of $\coh_0\X$ form a family of pairwise orthogonal standard tubes $(\mt_{\mu})_{\mu\in\mathbb{P}_1(k)}$, where each tube $\mt_{\mu}$ has  rank ${\bf p}_{\boldsymbol{\lambda}}(\mu)$.
 	\end{proposition}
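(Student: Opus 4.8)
The plan is to analyze $\coh_0\X$ through its simple objects together with the local structure of the $\mathbb{L}$-graded algebra $S$ at each point of $\mathbb{P}_1(k)$, using throughout the Hom-formula \eqref{e:hom} and Serre duality \eqref{e:serre-duality}. The four things to establish are: (i) that the finite-length sheaves form an exact abelian, uniserial subcategory, (ii) that this subcategory splits as an orthogonal coproduct indexed by the points $\mu\in\mathbb{P}_1(k)$, (iii) that each summand is a standard stable tube, and (iv) that the rank of the tube at $\mu$ is ${\bf p}_{\boldsymbol{\lambda}}(\mu)$. My strategy is to treat (i) and (ii) by soft general arguments, and to reduce (iii) and (iv) to an explicit computation of $\tau$ and of first extensions between simples.

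For the exact abelian and stability claims I would argue as follows. In any abelian category the full subcategory of objects admitting a finite composition series is closed under subobjects, quotients and extensions, hence is a Serre subcategory; applied to $\coh\X$ this shows $\coh_0\X$ is an exact abelian subcategory. Stability under $\tau$ is then immediate: since $\tau$ is the grading shift by $\vec{\omega}$, it is an autoequivalence of $\coh\X$ preserving composition length, so it preserves $\coh_0\X$. Uniseriality I would defer until the tube structure is in hand, since it follows formally once each component is known to be a tube (every indecomposable object of a tube has a chain of subobjects as its submodule lattice).

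Next I would classify the simple objects and deduce the support decomposition. For an exceptional point $\lambda_i$, multiplication by $X_i$ (an element of $S_{\vec{x}_i}=\Hom_{\X}(\mathcal{O},\mathcal{O}(\vec{x}_i))$ by \eqref{e:hom}) gives short exact sequences $0\to\mathcal{O}(j\vec{x}_i)\xrightarrow{X_i}\mathcal{O}((j+1)\vec{x}_i)\to S_{i,j}\to 0$ whose cokernels are simple and supported at $\lambda_i$; letting $0\le j<p_i$ produces exactly $p_i$ pairwise non-isomorphic simples. For an ordinary point $\mu$ (weight $1$) the degree-$\vec{c}$ form vanishing at $\mu$ yields a single simple $S_\mu$. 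Using \eqref{e:hom} one checks that simples supported at distinct points are $\Hom$- and $\Ext^1$-orthogonal, so $\coh_0\X=\coprod_{\mu}\coh_{0,\mu}\X$ with the summands pairwise orthogonal; this gives the pairwise orthogonal components of the statement and reduces everything to a single point.

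Finally, for the tube structure I would compute the action of $\tau$ on simples via \eqref{e:serre-duality} and $\vec{\omega}=(t-2)\vec{c}-\sum_i\vec{x}_i$, obtaining $\tau S_{i,j}\cong S_{i,j-1}$ (indices mod $p_i$) at an exceptional point and $\tau S_\mu\cong S_\mu$ at an ordinary one; together with the one-dimensionality of $\Ext^1_{\X}(S_{i,j},S_{i,j+1})$ and the vanishing of the remaining extensions between distinct simples, this identifies the Auslander-Reiten component at $\lambda_i$ with the stable tube of rank $p_i={\bf p}_{\boldsymbol{\lambda}}(\lambda_i)$ and the homogeneous tube of rank $1$ at ordinary points, matching the weight function and giving uniseriality and standardness. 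The main obstacle is exactly this last local computation at the exceptional points: proving that there are precisely $p_i$ simples cyclically permuted by $\tau$ and that the relevant first extension spaces are one-dimensional is the heart of the proof, since it is what pins down the rank of each tube and separates the exceptional points from the ordinary ones.
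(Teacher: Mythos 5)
The paper itself does not prove this proposition; it is quoted directly from \cite[Proposition 2.5]{GeigleLenzing}, so your proposal can only be measured against the classical argument of Geigle--Lenzing (and Ringel's theory of tubes). In outline you do follow that argument: classify the simple sheaves point by point via \eqref{e:hom}, split $\coh_0\X$ by support into pairwise orthogonal blocks, and compute $\tau$ and $\Ext^1$ on the simples. However, there is a genuine gap in how you handle uniseriality and standardness. You defer uniseriality, asserting that it ``follows formally once each component is known to be a tube,'' but this has the logical dependency backwards. From the $\tau$-periodicity of the simples and your Ext-computations, general Auslander--Reiten theory (the Happel--Preiser--Ringel theorem on components containing periodic objects) gives you at most the \emph{shape} of the component, namely $\mathbb{Z}A_\infty/\langle\tau^{p}\rangle$; it gives you neither standardness nor any control over subobject lattices. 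A component that is merely tube-shaped need not consist of uniserial objects, and non-standard stable tubes do exist for general algebras, so the implication ``tube $\Rightarrow$ uniserial'' on which you rely is false without further input. The classical proof runs in the opposite direction: from the cyclic Ext-quiver of the simples concentrated at $\mu$ (one-dimensional $\Ext^1$ from each simple to its $\tau$-translate, all other Hom and Ext spaces between distinct simples zero) one first proves, by a length-category argument (induction on length, or Gabriel's description of uniserial length categories), that the block of $\coh_0\X$ at $\mu$ is a connected uniserial abelian category; uniseriality then forces its Auslander--Reiten quiver to be a \emph{standard} stable tube whose rank equals the number of simples, i.e. ${\bf p}_{\boldsymbol{\lambda}}(\mu)$. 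So uniseriality is the input and the standard tube is the output, not the other way around.

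A second, smaller gap: you construct simple sheaves at each point but never argue that your list is complete. The decomposition $\coh_0\X=\coprod_\mu \coh_{0,\mu}\X$ exhausts $\coh_0\X$ only if every simple sheaf appears in your list; this requires the (standard, but not automatic) fact that every nonzero finite-length sheaf receives a nonzero morphism from some line bundle $\mathcal{O}(\vec{x})$, whence every simple is a quotient of a line bundle and is identified by \eqref{e:hom}. With that completeness step added and the order of the uniseriality argument corrected, your outline does match the proof of \cite[Proposition 2.5]{GeigleLenzing}.
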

	
For $\lambda_i$ with weight $p_i\geq 2$, there is exactly one simple object $S_i$ in $\mt_{\lambda_i}$ satisfying $\Hom_\X(\mathcal{O}, S_i)\neq 0$. Moreover, there exists a sequence of exceptional objects and epimorphims
\[S_i^{[p_i-1]}\twoheadrightarrow S_i^{[p_i-2]}\twoheadrightarrow \cdots\twoheadrightarrow S_i^{[1]}=S_i,
\]
where $S_i^{[j]}$ has length $j$ and top $S_i$.

The following is well-known (cf. \cite{GeigleLenzing}).
\begin{proposition}

Both
 \[T_{\text{can}}(\X):=\bigoplus\limits_{0\leq \vec{x}\leq \vec{c}}\mathcal{O}(\vec{x})~\text{and}~T_{\text{sq}}(\X):=\mathcal{O}\oplus \mathcal{O}(\vec{c})\oplus \bigoplus_{i=1}^t(\bigoplus_{k=1}^{p_i-1}S_i^{[p_i-k]})
 \] are tilting objects of $\coh \mathbb{X}$.
\end{proposition}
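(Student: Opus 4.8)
The plan is to prove, for each of the two objects $T\in\{T_{\mathrm{can}}(\X),T_{\mathrm{sq}}(\X)\}$, that $T$ is \emph{rigid}, i.e. $\Ext^1_\X(T,T)=0$, and that $T$ has exactly $n:=\mathrm{rk}\,K_0(\coh\X)=2+\sum_{i=1}^t(p_i-1)$ pairwise non-isomorphic indecomposable direct summands. Since $\coh\X$ is a connected hereditary abelian category with tilting objects, a basic rigid object has at most $n$ indecomposable summands, with equality precisely when it is a tilting object (every rigid object admits a Bongartz-type complement to a tilting object, and a tilting object has exactly $n$ summands); so rigidity together with the summand count yields the claim. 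The count is immediate: the interval $\{\vec x\mid 0\le\vec x\le\vec c\}$ consists of $0$, of $\vec c$, and of $l\vec x_i$ with $1\le i\le t$ and $1\le l\le p_i-1$, hence has $n$ elements, while $T_{\mathrm{sq}}(\X)$ manifestly has $2+\sum_{i=1}^t(p_i-1)=n$ summands.

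For $T_{\mathrm{can}}(\X)$ rigidity is a short computation. For $0\le\vec x,\vec y\le\vec c$, combining Serre duality \eqref{e:serre-duality} with \eqref{e:hom} gives $\Ext^1_\X(\mathcal{O}(\vec x),\mathcal{O}(\vec y))\cong \D\,S_{\vec x+\vec\omega-\vec y}$, which vanishes exactly when $\vec x+\vec\omega-\vec y\not\ge 0$. By the dichotomy that each element of $\mathbb{L}$ satisfies exactly one of $0\le\vec w$ and $\vec w\le\vec c+\vec\omega$, the condition $\vec x+\vec\omega-\vec y\not\ge 0$ is equivalent to $\vec x+\vec\omega-\vec y\le\vec c+\vec\omega$, that is, to $\vec c-\vec x+\vec y\ge 0$. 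This holds because $\vec c-\vec x\ge 0$ and $\vec y\ge 0$, and the set of effective degrees is closed under addition. Hence $T_{\mathrm{can}}(\X)$ is rigid, and therefore tilting.

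For $T_{\mathrm{sq}}(\X)$ I would split the verification $\Ext^1_\X(T_{\mathrm{sq}},T_{\mathrm{sq}})=0$ according to the types of the two summands. The pairs of line bundles from $\{\mathcal{O},\mathcal{O}(\vec c)\}$ are covered by the computation above. For a line bundle $L\in\{\mathcal{O},\mathcal{O}(\vec c)\}$ and a torsion summand $S_i^{[j]}$, Serre duality identifies $\Ext^1_\X(L,S_i^{[j]})$ with $\D\Hom_\X(S_i^{[j]},L(\vec\omega))$, which vanishes because there is no nonzero morphism from $\coh_0\X$ to $\vect\X$. For torsion summands lying in distinct tubes the relevant $\Ext^1$ vanishes by the pairwise orthogonality of the tubes $\mt_\mu$ in Proposition~\ref{p:structure-finite}.

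The remaining and most delicate vanishings are the mixed one $\Ext^1_\X(S_i^{[j]},L)=0$ and the intra-tube one $\Ext^1_\X(S_i^{[j]},S_i^{[j']})=0$ for $1\le j,j'\le p_i-1$; these I expect to be the main obstacle. By \eqref{e:serre-duality} they reduce, respectively, to $\Hom_\X(L,\tau S_i^{[j]})=0$ and $\Hom_\X(S_i^{[j']},\tau S_i^{[j]})=0$ inside the standard tube $\mt_{\lambda_i}$ of rank $p_i$. The decisive structural input is that the composition factors of $S_i^{[j]}$, read from the top $S_i$, are $S_i,\tau S_i,\dots,\tau^{j-1}S_i$, so those of $\tau S_i^{[j]}$ are $\tau S_i,\dots,\tau^{j}S_i$; since $1\le j\le p_i-1<p_i$ and $S_i$ is the unique simple of $\mt_{\lambda_i}$ with $\Hom_\X(\mathcal{O},S_i)\neq 0$, the special simple $S_i$ never occurs among these factors. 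Left-exactness of $\Hom_\X(\mathcal{O},-)$ then gives $\Hom_\X(\mathcal{O},\tau S_i^{[j]})=0$, and the case $L=\mathcal{O}(\vec c)$ follows because the grading shift by $\vec c$ acts as the identity on each tube (equivalently $\tau^{p_i}\cong\mathrm{id}$ on $\mt_{\lambda_i}$), so that $\Hom_\X(\mathcal{O}(\vec c),\tau S_i^{[j]})\cong\Hom_\X(\mathcal{O},\tau S_i^{[j]})=0$. The intra-tube vanishing is a parallel piece of bookkeeping: a nonzero map $S_i^{[j']}\to\tau S_i^{[j]}$ would have image simultaneously a quotient of $S_i^{[j']}$, hence with top $S_i$, and a subobject of $\tau S_i^{[j]}$, hence with top $\tau^{l}S_i$ for some $1\le l\le j$, which is impossible when all lengths are $<p_i$. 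Assembling these cases shows that $T_{\mathrm{sq}}(\X)$ is rigid, and the summand count then forces it to be tilting.
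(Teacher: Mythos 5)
Your proposal is correct in substance, but it cannot be matched against the paper's own argument for the simple reason that the paper gives none: the proposition is quoted as well-known, with a citation to Geigle--Lenzing. Measured against the classical proof in the literature, your rigidity computations are exactly the standard ones: for $T_{\text{can}}(\X)$, combining \eqref{e:serre-duality}, \eqref{e:hom} and the dichotomy ``$0\le \vec w$ or $\vec w\le \vec c+\vec\omega$'' is precisely Geigle--Lenzing's computation, and your tube bookkeeping for $T_{\text{sq}}(\X)$ (composition factors $S_i,\tau S_i,\dots,\tau^{j-1}S_i$ of $S_i^{[j]}$, triviality of the $\vec c$-shift on $\mt_{\lambda_i}$, orthogonality of tubes combined with Serre duality) checks out; note only that you are implicitly using the standard effectivity order on $\mathbb{L}$ (i.e.\ $\vec x\le\vec y$ iff $\vec y-\vec x$ is effective), which is the intended one --- under the paper's literal coefficientwise definition the index set $\{0\le\vec x\le\vec c\}$ would degenerate to $\{0,\vec c\}$, at odds with Figure \ref{f:2}. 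Where you genuinely diverge from the classical route is in the second tilting axiom, that $\Hom_{\X}(T,X)=0=\Ext^1_{\X}(T,X)$ forces $X=0$: you replace its verification by the criterion that a basic rigid object with $n=\operatorname{rk}K_0(\coh\X)=2+\sum_{i=1}^t(p_i-1)$ indecomposable summands is tilting. This criterion is true for $\coh\X$, but the two inputs you invoke for it --- Bongartz-type completion of rigid objects, and the fact that every tilting object has exactly $n$ indecomposable summands --- are themselves nontrivial theorems of essentially the same depth as the proposition being proved (they can be obtained from Geigle--Lenzing's perpendicular-category calculus, or from completion of rigid objects to cluster-tilting objects in $\mathcal{C}_\X$ together with the lemma in Section \ref{s:weighted-projective-line} identifying tilting objects of $\coh\X$ with cluster-tilting objects of $\mathcal{C}_\X$), so as written they require explicit citations rather than a parenthetical appeal. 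The classical proof avoids this by verifying generation directly, using that every vector bundle has a line-bundle filtration and that sheaves of finite length are handled by the tube structure. What your route buys is uniformity --- one counting principle disposes of the generation axiom for both $T_{\text{can}}(\X)$ and $T_{\text{sq}}(\X)$ at once; what it costs is dependence on this heavier, and here unproved, machinery.
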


\subsection{Quivers associated with $T_{\text{can}}(\X)$ and $T_{\text{sq}}(\X)$}
Denote by $\der^b(\coh\X)$ the bounded derived category of $\coh\X$ with suspension functor $\Sigma$. Let $\tau:\der^b(\coh \X)\to \der^b(\coh\X)$ be the Auslander-Reiten (AR) translation functor, which restricts to the AR translation of $\coh\X$.
\begin{definition}
The {\it cluster category} $\mathcal{C}_\X$ associated with $\X$ is defined as the orbit category $\der^b(\coh\X)/\langle \tau^{-1}\circ \Sigma \rangle$; it has the same objects as $\der^b(\coh\X)$, morphism spaces are given by $\bigoplus_{i\in \Z}\Hom_{\der^b(\coh\X)}(X, (\tau^{-1}\circ \Sigma)^i Y)$ with obvious composition. 
\end{definition}

The cluster category $\mathcal{C}_\X$ admits a canonical triangle structure such that the projection $\pi_{\X}:\der^b(\coh\X)\to \mathcal{C}_\X$ is a triangle functor (cf. \cite{Keller}). 
The suspension functor $\Sigma$ (resp. the AR translation $\tau$) of $\der^b(\coh\X)$ induces the suspension functor (resp. the AR translation) of $\mathcal{C}_\X$, which will be denoted by $\Sigma$ (resp. $\tau$) as well.
It was shown in~\cite{Keller} that $\mathcal{C}_\X$ is a $2$-Calabi-Yau triangulated category, i.e., for any $X,Y\in \mathcal{C}_\X$, we have bifunctorially isomorphisms
\[\Hom_{\mathcal{C}_\X}(X, \Sigma^2Y)\cong \mathbb{D}\Hom_{\mathcal{C}_\X}(Y, X).
\]
By the $2$-Calabi-Yau property, we clearly have $\tau=\Sigma$ in $\mathcal{C}_\X$. 
\begin{definition}
An object $T\in \mathcal{C}_\X$ is a {\it cluster-tilting object} if $\Ext^1_{\mathcal{C}_\X}(T,T)=0$ and $\Ext^1_{\mathcal{C}_\X}(T,X)=0$ implies that $X\in \add T$, where $\add T$ is the full subcategory of $\mathcal{C}_\X$ consisting of direct summands of direct sum of finite copies of $T$.  
\end{definition}
 Since $\coh\X$ has no nonzero projective objects, the composition of the embedding of $\coh\X$ into $\der^b(\coh\X)$ with the projection functor $\pi_\X$ yields a bijection between the set of isomorphism classes of indecomposable objects of $\coh\X$ and the set of isomorphism classes of indecomposable objects of $\mathcal{C}_\X$. We may identify the objects of $\coh\X$ with the ones of $\mathcal{C}_\X$ by the bijection.
 \begin{lemma}\cite[Section 3]{BMRRT}
 An object $T\in \coh\X$ is a tilting object if and only if $T$ is a cluster-tilting object of $\mathcal{C}_\X$.
 \end{lemma}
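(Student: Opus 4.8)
The plan is to transfer the statement entirely into $\coh\X$ by computing $\Ext^1$ in the cluster category, and then to recognize both ``tilting'' and ``cluster-tilting'' as the single notion of a \emph{maximal rigid} object of $\coh\X$.

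First I would compute, for objects $X,Y\in\coh\X$, the group $\Ext^1_{\mathcal{C}_\X}(X,Y)=\Hom_{\mathcal{C}_\X}(X,\Sigma Y)$ from the orbit-category description of morphisms. Writing it as $\bigoplus_{i\in\Z}\Hom_{\der^b(\coh\X)}(X,(\tau^{-1}\Sigma)^i\Sigma Y)$ and using that $\coh\X$ is hereditary together with the fact that $\tau$ restricts to an autoequivalence of $\coh\X$ (it is the grading shift by $\vec{\omega}$), only the two summands with $i=0$ and $i=-1$ survive, giving
\[
\Ext^1_{\mathcal{C}_\X}(X,Y)\cong\Ext^1_\X(X,Y)\oplus\Hom_\X(X,\tau Y).
\]
Serre duality \eqref{e:serre-duality} converts the second term into $\D\Ext^1_\X(Y,X)$, yielding
\[
\Ext^1_{\mathcal{C}_\X}(X,Y)\cong\Ext^1_\X(X,Y)\oplus\D\Ext^1_\X(Y,X).
\]
The crucial consequence is symmetric and purely hereditary: for $X,Y\in\coh\X$ one has $\Ext^1_{\mathcal{C}_\X}(X,Y)=0$ if and only if both $\Ext^1_\X(X,Y)=0$ and $\Ext^1_\X(Y,X)=0$, that is, if and only if $X\oplus Y$ is rigid in $\coh\X$.

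Next I would reformulate both clauses of the lemma as the single condition that $T$ be a maximal rigid object of $\coh\X$, meaning $\Ext^1_\X(T,T)=0$ and $X\in\add T$ whenever $T\oplus X$ is rigid. Setting $X=Y=T$ above shows that $T$ is rigid in $\mathcal{C}_\X$ exactly when it is rigid in $\coh\X$. For the completeness clause I would invoke the bijection recalled in the excerpt between the indecomposables of $\coh\X$ and those of $\mathcal{C}_\X$, valid because $\coh\X$ has no nonzero projective objects: any $X\in\mathcal{C}_\X$ may be taken inside $\coh\X$, and then $\Ext^1_{\mathcal{C}_\X}(T,X)=0$ is equivalent to $T\oplus X$ being rigid in $\coh\X$. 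Hence the defining implication $\Ext^1_{\mathcal{C}_\X}(T,X)=0\Rightarrow X\in\add T$ of a cluster-tilting object is precisely maximal rigidity of $T$ in $\coh\X$.

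It then remains to identify the maximal rigid objects of $\coh\X$ with the tilting objects. If $T$ is tilting it has $\rank K_0(\coh\X)$ pairwise nonisomorphic indecomposable summands, whereas any rigid object of $\coh\X$ has at most that many; so $T\oplus X$ rigid forces $X\in\add T$, and $T$ is maximal rigid. Conversely, by the Bongartz-type completion available in $\coh\X$ (Geigle--Lenzing tilting theory), any rigid object is a summand of some tilting object $T\oplus T'$; maximal rigidity gives $T'\in\add T$, so $T$ is already tilting. Combining the three steps proves the equivalence. The main obstacle, and the only input that is not formal, is the first step: one must check that no further summands $(\tau^{-1}\Sigma)^i\Sigma Y$ contribute, which relies on $\coh\X$ being hereditary and on $\tau$ preserving $\coh\X$ as an autoequivalence, and one must apply Serre duality to symmetrize. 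Everything afterward is a matter of matching definitions, granted the two standard structural facts about $\coh\X$ used at the end, namely the summand bound for rigid objects and the existence of Bongartz completions.
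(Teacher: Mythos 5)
Your proof is correct, and it is essentially the argument behind the paper's own treatment: the paper gives no proof at all, simply citing \cite[Section 3]{BMRRT}, and your route --- the orbit-category computation $\Ext^1_{\mathcal{C}_\X}(X,Y)\cong\Ext^1_\X(X,Y)\oplus\D\Ext^1_\X(Y,X)$ via hereditariness and Serre duality, followed by identifying both ``tilting'' and ``cluster-tilting'' with ``maximal rigid in $\coh\X$'' --- is exactly the standard proof underlying that citation (as adapted from hereditary algebras to $\coh\X$ in the literature, e.g.\ by Barot--Kussin--Lenzing). The only external inputs are the two structural facts you explicitly flag, namely the bound on the number of indecomposable summands of a rigid sheaf and the completion of rigid objects to tilting objects; for weighted projective lines these are due to H\"ubner within the Geigle--Lenzing framework, so invoking them is legitimate and matches standard practice.
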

 In particular, $T_{\text{can}}(\X)$ and $T_{\text{sq}}(\X)$ are basic cluster-tilting objects of $\mathcal{C}_\X$. We denote by $Q_{T_\text{can}(\X)}$ (resp. $Q_{T_{\text{sq}}(\X)}$) the Gabriel quiver of the endomorphism algebra $\End_{\mathcal{C}_\X}(T_{\text{can}}(\X))$ (resp. $\End_{\mathcal{C}_\X}(T_{\text{sq}}(\X))$). The quivers have been listed in Figure ~\ref{f:1} and Figure \ref{f:2} respectively. We remark that the relation of the corresponding algebra is quite complicated in general and we do not need in the sequel.

\begin{figure}[ht]

\begin{tikzpicture}[
    scale=2.5,axis/.style={ very thick, ->, >=stealth'},
    arrow/.style={->, >=stealth'},
    important line/.style={thick},
    dashed line/.style={dashed, thin},
    pile/.style={thick, ->, >=stealth', shorten <=2pt, shorten
    >=2pt},
    every node/.style={color=black}
    ]

  \draw[arrow](-0.5,0.02)--(0.5,0.02);
  \draw[arrow](-0.5,-0.02)--(0.5,-0.02);
  \draw[arrow](0.5,0.02)--(0,0.5);
  \draw[arrow](0,0.5)--(-0.5,0.02);
   \draw[arrow](0.5,-0.02)--(0,-0.25);
  \draw[arrow](0,-0.25)--(-0.5,-0.02);
  \node at (-0.55,0){\tiny{$\mathcal{O}$}};
  \node at (0.65,0){\tiny{$\mathcal{O}(\vec{c})$}};
   \node at (0.05,0.6){\tiny{$S_1^{[p_1-1]}$}};
   \node at (0.06,-0.15){\tiny{$S_2^{[p_2-1]}$}};
  \node at (0,-0.35){\tiny{$\vdots$}};
  \draw[arrow](0.5,-0.02)--(0,-0.5);
  \draw[arrow](0,-0.5)--(-0.5,-0.02);
 \node at (0.05,-0.6){\tiny{$S_t^{[p_t-1]}$}};
 
 \draw[arrow](0,0.5)--(0.5,0.5);
 \node at (0.5,0.6){\tiny{$S_1^{[p_1-2]}$}};
 \draw[dashed line](0.5,0.5)--(1,0.5);
  \node at (1,0.6){\tiny{$S_1^{[2]}$}};
  \draw[arrow](1,0.5)--(1.5,0.5);
\node at (1.5,0.6){\tiny{$S_1^{[1]}$}};
 
  \draw[arrow](0,-0.25)--(0.5,-0.25);
 \node at (0.6,-0.15){\tiny{$S_2^{[p_2-2]}$}};
 \draw[dashed line](0.5,-0.25)--(1,-0.25);
  \node at (1,-0.15){\tiny{$S_2^{[2]}$}};
  \draw[arrow](1,-0.25)--(1.5,-0.25);
\node at (1.5,-0.15){\tiny{$S_2^{[1]}$}};
 
  \draw[arrow](0,-0.5)--(0.5,-0.5);
 \node at (0.5,-0.6){\tiny{$S_t^{[p_t-2]}$}};
 \draw[dashed line](0.5,-0.5)--(1,-0.5);
  \node at (1,-0.6){\tiny{$S_t^{[2]}$}};
  \draw[arrow](1,-0.5)--(1.5,-0.5);
\node at (1.5,-0.6){\tiny{$S_t^{[1]}$}};
 
 \node at (0.5, -0.35){\tiny{$\vdots$}};
 \node at (1, -0.35){\tiny{$\vdots$}};
\end{tikzpicture}

\caption{Quiver $Q_{T_{\text{sq}}(\X)}$ with weight sequence $(p_1, \dots, p_t)$.}~\label{f:1}
\end{figure}

\begin{figure}[ht]
\begin{tikzpicture}[
    scale=2.5,axis/.style={ very thick, ->, >=stealth'},
    arrow/.style={ ->, >=stealth'},
    important line/.style={thick},
    dashed line/.style={dashed, thin},
    pile/.style={thick, ->, >=stealth', shorten <=2pt, shorten
    >=2pt},
    every node/.style={color=black}
    ]

  \draw[arrow](2,0)--(0.02,0);
  \node at (1,0.05){{\bf $t-2$}};
  \draw[arrow](0,0)--(0.4,0.4);
  \draw[arrow](0.4,0.4)--(0.8,0.4);
  \draw[dashed line](0.8,0.4)--(1.2, 0.4);
  \draw[arrow](1.2,0.4)--(1.6,0.4);
  \draw[arrow](1.6,0.4)--(2,0);
  
   \node at (-0.05,0){\tiny{$\mathcal{O}$}};
   \node at (2.15,0){\tiny{$\mathcal{O}(\vec{c})$}};
    \node at (0.4,0.5){\tiny{$\mathcal{O}(\vec{x}_1)$}};
   \node at (0.8,0.5){\tiny{$\mathcal{O}(2\vec{x}_1)$}};
    \node at (1.6,0.5){\tiny{$\mathcal{O}((p_1-1)\vec{x}_1)$}};
   
  \draw[arrow](0,0)--(0.4,-0.2);
  \draw[arrow](0.4,-0.2)--(0.8,-0.2);
  \draw[dashed line](0.8,-0.2)--(1.2, -0.2);
  \draw[arrow](1.2,-0.2)--(1.6,-0.2);
  \draw[arrow](1.6,-0.2)--(2,0);
  
  \node at (0.4, -0.1){\tiny{$\mathcal{O}(\vec{x}_2)$}};
  \node at (0.8, -0.1){\tiny{$\mathcal{O}(2\vec{x}_2)$}};
  \node at (1.5, -0.1){\tiny{$\mathcal{O}((p_2-1)\vec{x}_2)$}};
  \draw[arrow](0,0)--(0.4,-0.5);
  \draw[arrow](0.4,-0.5)--(0.8,-0.5);
  \draw[dashed line](0.8,-0.5)--(1.2, -0.5);
  \draw[arrow](1.2,-0.5)--(1.6,-0.5);
  \draw[arrow](1.6,-0.5)--(2,0);
  
  \node at (0.4, -0.6){\tiny{$\mathcal{O}(\vec{x}_t)$}};
  \node at (0.8, -0.6){\tiny{$\mathcal{O}(2\vec{x}_t)$}};
  \node at (1.6, -0.6){\tiny{$\mathcal{O}((p_t-1)\vec{x}_t)$}};
  \node at (0.4,-0.3){\tiny{$\vdots$}};
  \node at (0.8,-0.3){\tiny{$\vdots$}};
  \node at (1.2,-0.3){\tiny{$\vdots$}};
  \node at (1.6,-0.3){\tiny{$\vdots$}};
  
\end{tikzpicture}
\caption{Quiver $Q_{T_{\text{can}}(\X)}$ with weight sequence $(p_1,\dots, p_t)$, where the label $t-2$ means that there are $t-2$ arrows from $\mathcal{O}(\vec{c})$ to $\mathcal{O}.$ }~\label{f:2}
\end{figure}
The following is a direct consequence of \cite[Theorem 1.2]{FG}.
\begin{proposition}\label{p:mutation-equivalent}
Let $T$ be a basic cluster-tilting object of $\mathcal{C}_\X$ and $Q_T$ the Gabriel quiver of the endomorphism algebra of $T$. Then 
\begin{enumerate}
\item $Q_T$ is mutation-equivalent to $Q_{T_{\text{sq}}(\X)}$. In particular, the quiver $Q_{T_\text{can}(\X)}$ is mutation-equivalent to $Q_{T_{\text{sq}}(\X)}$.
\item $Q_T$ admits a green-to-red sequence.
\end{enumerate}

\end{proposition}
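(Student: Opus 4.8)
The plan is to derive both statements from the reachability of cluster-tilting objects supplied by \cite[Theorem 1.2]{FG}, combined with the standard compatibility between mutation of cluster-tilting objects and Fomin--Zelevinsky quiver mutation, and the tropical/categorical dictionary for $c$-vectors recorded in Theorem~\ref{t:tropical-duality}.

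For (1), I would begin by recalling that, since $\mathcal{C}_\X$ is $2$-Calabi--Yau with cluster-tilting objects, it carries Iyama--Yoshino mutation: for a basic cluster-tilting object $T=\bar{T}\oplus T_k$ with $T_k$ indecomposable there is a unique indecomposable $T_k^\ast\not\cong T_k$ such that $\mu_k(T):=\bar{T}\oplus T_k^\ast$ is again a basic cluster-tilting object, and the Gabriel quiver of $\End_{\mathcal{C}_\X}(\mu_k(T))$ is obtained from $Q_T$ by the quiver mutation $\mu_k$ (after deleting $2$-cycles). Granting from \cite[Theorem 1.2]{FG} that any basic cluster-tilting object $T$ is linked to $T_{\text{sq}}(\X)$ by a finite chain of such single mutations, a straightforward induction on the length of the chain yields $Q_T\in\mut(Q_{T_{\text{sq}}(\X)})$. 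Since $T_{\text{can}}(\X)$ is itself a basic cluster-tilting object, the stated special case is immediate.

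For (2), part (1) already places every $Q_T$ in the single mutation class $\mut(Q_{T_{\text{sq}}(\X)})$, and the existence of a green-to-red sequence is invariant under mutation by \cite[Corollary 19]{Muller}; it therefore suffices to produce a green-to-red sequence for one convenient representative, for which I would take $Q:=Q_T$ with its framed quiver $(\hat{Q},Q_0^\ast)$. By Remark~\ref{r:mgs} a mutation sequence $\mathfrak{i}$ is green-to-red precisely when the coefficient part of $\mu_{\mathfrak{i}}(B(\hat{Q},Q_0^\ast))$ is a permutation of $-I_n$, so no control over intermediate steps is needed and only the terminal $C$-matrix matters. The idea is to run the sequence of Iyama--Yoshino mutations that transports the initial object $T$ to its shift $\Sigma T$; here $\Sigma T$ is again a basic cluster-tilting object because $\Sigma$ is a triangle auto-equivalence (and $\Sigma T=\tau T$ by the $2$-Calabi--Yau property), and it is reachable from $T$ by the same connectedness result \cite[Theorem 1.2]{FG}. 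Under the identification of the columns of $G_t$ with the indices of the summands of the cluster-tilting object at $t$, reaching $\Sigma T$ forces $G_t$ to be a permutation of $-I_n$ (the index of $\Sigma T_i$ being $-e_i$), whence $C_t$ is a permutation of $-I_n$ by the tropical duality $G_t^{T}C_t=I_n$ of Theorem~\ref{t:tropical-duality}. This is exactly the green-to-red condition.

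The step I expect to be the main obstacle is the precise matching in (2) between the combinatorial framed-quiver mutations and the categorical passage from $T$ to $\Sigma T$: one must check that the quiver mutations recording this passage realise the $C$-matrix as the matrix of indices, and confirm that the terminal $C$-matrix is a permutation of $-I_n$ rather than merely sign-coherent. This rests on the $g$-vector/$c$-vector formalism and on the behaviour of indices under the shift functor; the sign-coherence at every intermediate vertex is automatic from Theorem~\ref{t:dwz}, and the remaining verifications are routine matrix-mutation bookkeeping once the dictionary is in place.
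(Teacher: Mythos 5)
Your proposal is correct and takes essentially the same route as the paper: the paper gives no written argument at all, stating the proposition as a ``direct consequence'' of \cite[Theorem 1.2]{FG} (the connectedness, i.e.\ mutation-reachability, of the cluster-tilting graph of $\mathcal{C}_\X$), which is precisely the input you build on. Your unpacking---Iyama--Yoshino mutation compatibility with Fomin--Zelevinsky quiver mutation for (1), and reachability of $\Sigma T$ from $T$ combined with the index/$g$-vector dictionary and the tropical duality $G_t^{T}C_t=I_n$ for (2)---is the standard argument implicit in that citation.
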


\subsection{The classification}
Denote by $p=\operatorname{lcm}(p_1,\dots, p_t)$ the least common multiple of $p_1,\dots, p_t$. The {\it genus} $g_\X$ of $\X$ is defined as
\[g_\X=1+\frac{1}{2}((t-2)p-\sum_{i=1}^t\frac{p}{p_i}).
\]
A weighted projective line of genus $g_\X<1$($g_\X=1$, resp. $g_\X>1$) is  of {\it domestic} ({\it tubular}, resp. {\it wild}) type. The domestic types are, up to permutation, $(1,p) $ with $p\geq 1$, $(p,q)$ with $p,q\geq 2$, $(2,2,n)$ with $n\geq 2$, $(2,3,3)$, $(2,3,4)$ and $(2,3,5)$, whereas the tubular types are, up to permutation, $(2,2,2,2)$, $(3,3,3)$, $(2,4,4)$ and $(2,3,6)$.
It is worth pointing out that a weighted projective line of domestic type is derived equivalent to a finite dimensional hereditary algebra of tame type.

\begin{proposition}\label{p:finite-mutation-wpl}
Let $\X$ be a weighted projective line. The quiver $Q_{T_{\text{sq}}(\X)}$ is of  finite mutation type if and only if $\X$ is of domestic type or of tubular type.
\end{proposition}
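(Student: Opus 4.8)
The proof rests on two facts drawn from the earlier results. First, being of finite mutation type is an invariant of the mutation class, and by Proposition~\ref{p:mutation-equivalent}(1) the quiver $Q_{T_{\text{sq}}(\X)}$ lies in the same mutation class as $Q_{T_{\text{can}}(\X)}$ and, more generally, as $Q_T$ for any basic cluster-tilting object $T$ of $\mathcal{C}_\X$. Second, I will use the following localisation principle, immediate from the matrix mutation rule: if $Q'$ is a full subquiver of $Q$ on a vertex subset $V$, then for any $k\in V$ the full subquiver of $\mu_k(Q)$ on $V$ equals $\mu_k(Q')$, since the entry $b_{ij}'$ with $i,j\in V$ depends only on $b_{ij},b_{ik},b_{kj}$, all internal to $V$. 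Iterating, every quiver in $\mut(Q')$ occurs as a full subquiver of a quiver in $\mut(Q)$; hence a full subquiver of a quiver of finite mutation type is again of finite mutation type, and dually a quiver containing a full subquiver of infinite mutation type is itself of infinite mutation type. The plan is to split along the trichotomy domestic/tubular/wild, proving finiteness in the first two cases and infiniteness in the third.

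For the non-wild cases I argue as follows. If $\X$ is of domestic type, then $\coh\X$ is derived equivalent to a tame hereditary algebra $KQ$ with $Q$ acyclic of (extended) Dynkin type; the derived equivalence induces a triangle equivalence $\mathcal{C}_\X\simeq \mathcal{C}(KQ)$ under which the cluster-tilting object represented by $KQ$ has Gabriel quiver $Q$. Thus by Proposition~\ref{p:mutation-equivalent}(1) the quiver $Q_{T_{\text{sq}}(\X)}$ is mutation-equivalent to $Q$, which is of finite mutation type by Lemma~\ref{l:finite-mutatin-type}(1),(2). If $\X$ is of tubular type, i.e.\ of weight type $(2,2,2,2)$, $(3,3,3)$, $(2,4,4)$ or $(2,3,6)$, I verify directly (by a short explicit mutation sequence, or with the MutationApp of~\cite{Kellerapp}) that $Q_{T_{\text{sq}}(\X)}$ is mutation-equivalent to the corresponding quiver $D_4^{(1,1)}$, $E_6^{(1,1)}$, $E_7^{(1,1)}$ or $E_8^{(1,1)}$ of Figure~\ref{f:quiver-tubular-type}; these are of finite mutation type by Lemma~\ref{l:finite-mutatin-type}(3). (For instance mutating $Q_{T_{\text{can}}(\X)}$ at the vertex $\mathcal{O}$ already returns $Q_{T_{\text{sq}}(\X)}$, so one works from whichever presentation is most convenient.)

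For the wild case I prove the contrapositive using the localisation principle. Reading off Figure~\ref{f:1}, if $\mathbf p=(p_1,\dots,p_t)$ and $\mathbf p'=(p_1',\dots,p_{t'}')$ satisfy $t\le t'$ and $p_i\le p_i'$ for $1\le i\le t$, then the squid quiver of weight type $\mathbf p$ is isomorphic to the full subquiver of the squid quiver of weight type $\mathbf p'$ obtained by keeping the two hub vertices $\mathcal{O},\mathcal{O}(\vec c)$ together with the top $p_i-1$ vertices of the $i$-th arm for $i\le t$, and discarding the remaining arms and arm-tails. A short computation with the inequality $\sum_{i=1}^t(1-1/p_i)>2$ characterising wild type shows that every wild weight sequence dominates in this partial order one of the five minimal wild sequences
\[
(2,3,7),\quad (2,4,5),\quad (3,3,4),\quad (2,2,2,3),\quad (2,2,2,2,2).
\]
By the localisation principle it therefore suffices to treat these five. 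For each I exhibit an explicit mutation sequence producing a quiver containing a full subquiver $Q_{a,b,c}$ with $a\ge 3$ and $b,c\ge 2$; such a subquiver is not of finite mutation type by Lemma~\ref{l:non-finite-mutation-type}, whence neither is the ambient quiver, nor $Q_{T_{\text{sq}}(\X)}$.

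The routine but genuinely case-dependent heart of the argument is this last step: for each of the five minimal wild sequences, exhibiting a concrete mutation sequence of the squid quiver that uncovers a $Q_{a,b,c}$ with $a\ge 3,\ b,c\ge 2$. I expect this to be the main obstacle, because the hub multiplicity only grows once several arms interact, so no single arm can be usefully retracted in isolation and the relevant mutations mix vertices from different arms; these computations are finite and can be cross-checked with~\cite{Kellerapp}. A secondary point requiring care is the claimed mutation-equivalence in the tubular case, which I would likewise settle by a finite verification matching $Q_{T_{\text{sq}}(\X)}$ to the quivers of Figure~\ref{f:quiver-tubular-type}.
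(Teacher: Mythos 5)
Your overall strategy is the same as the paper's: finiteness for domestic type via the derived equivalence with a tame hereditary algebra together with Lemma~\ref{l:finite-mutatin-type}; finiteness for tubular type via the quivers of Figure~\ref{f:quiver-tubular-type}; and, for wild type, the reduction (through the partial order on weight sequences and the compatibility of mutation with full subquivers) to the five minimal wild types $(2,3,7)$, $(2,4,5)$, $(3,3,4)$, $(2,2,2,3)$, $(2,2,2,2,2)$, concluded by Lemma~\ref{l:non-finite-mutation-type}. The only structural difference is that the paper uses Proposition~\ref{p:mutation-equivalent} to replace the squid subquiver $Q_{T_{\text{sq}}(\X')}$ by the canonical quiver $Q_{T_{\text{can}}(\X')}$ inside the mutation class, and then performs its explicit mutations on $Q_{T_{\text{can}}(\X')}$ with vertices labelled by line bundles as in Figure~\ref{f:2}, whereas you propose to mutate the squid of Figure~\ref{f:1} directly; both routes are legitimate, and your explicit statement of the localisation principle (which the paper uses only implicitly) is a point in your favour.

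The genuine gap is the step you yourself flag as the ``main obstacle'': for none of the five minimal wild types do you actually produce a mutation sequence uncovering a full subquiver $Q_{a,b,c}$ with $a\geq 3$ and $b,c\geq 2$. This cannot be deferred, because it is the entire content of the wild case --- everything else in your argument (localisation, domination of weight sequences, Lemma~\ref{l:non-finite-mutation-type}) is soft, and it is not evident a priori that such sequences exist; exhibiting them is exactly what the paper's proof consists of. Concretely, the paper gives: for $(2,2,2,3)$ and $(2,2,2,2,2)$ the two mutations $\mu_{\mathcal{O}}\mu_{\mathcal{O}(\vec{c})}$ of $Q_{T_{\text{can}}(\X')}$, which already exhibit $Q_{2,2,3}$, respectively $Q_{2,3,5}$; for $(3,3,4)$ a sequence of six mutations; and for $(2,4,5)$ and $(2,3,7)$ sequences of eleven and fourteen mutations. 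Until you supply such sequences (or a machine verification via \cite{Kellerapp}), what you have is a correct reduction, not a proof. A further small slip: your parenthetical claim that $\mu_{\mathcal{O}}(Q_{T_{\text{can}}(\X)})=Q_{T_{\text{sq}}(\X)}$ is false in general --- the mutated quiver has $t-2$ arrows from $\mathcal{O}$ to $\mathcal{O}(\vec{c})$ while the squid always has exactly two (for instance, for weight type $(3,3,3)$ one gets a single arrow) --- but this is harmless, since in the tubular case the squid quivers literally coincide with the quivers of Figure~\ref{f:quiver-tubular-type} and no mutation is needed there at all.
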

\begin{proof}
The ``if'' part follows from Lemma \ref{l:finite-mutatin-type}. More precisely, if $\X$ is of domestic type, then $\coh \X$ is derived equivalent to a finite dimensional hereditary algebra of tame type. As a consequence, the quiver $Q_{T_{\text{sq}}(\X)}$ is mutation-equivalent to an acyclic quiver of extended Dynkin type. If $\X$ is of tubular type, then the quiver $Q_{T_{\text{sq}}(\X)}$ is as in Figure \ref{f:quiver-tubular-type}.

For the ``only if" part, it suffices to prove that $Q_{T_{\text{sq}}(\X)}$ is not of finite mutation type provided that $\X$ is of wild type. According to Lemma \ref{l:non-finite-mutation-type}, it suffices to show that there is a quiver $Q$ in $\mut(Q_{T_{\text{sq}}(\X)})$ such that $Q$ admits a full subquiver $Q_{a,b,c}$ for some $2\leq c\leq b\leq a$ and $a\geq 3$.

Let $\X$ be a wild weighted projective line.
According to the classification of weighted projective lines, the quiver $Q_{T_{\text{sq}}(\X)}$ admits one of the following quivers as a subquiver
\begin{enumerate}
\item $Q_{T_{\text{sq}}(\X')}$ with weight sequence $(2,3,7)$;
\item $Q_{T_{\text{sq}}(\X')}$ with weight sequence $(2,4,5)$;
\item $Q_{T_{\text{sq}}(\X')}$ with weight sequence $(3,3,4)$;
\item $Q_{T_{\text{sq}}(\X')}$ with weight sequence $(2,2,2,3)$;
\item $Q_{T_{\text{sq}}(\X')}$ with weight sequence $(2,2,2,2,2)$.
\end{enumerate} 
Let $\mathbf{p}$ be one of the weight sequences in $(1)$-$(5)$. According to Proposition \ref{p:mutation-equivalent}, there is a  quiver $Q_{\mathbf{p}}$ in $\mut(Q_{T_{\text{sq}}(\X)})$ such that $Q_{\mathbf{p}}$ admits $Q_{T_{\text{can}}(\X')}$ as a full subquiver, where $\X'$ has the weight sequence $\mathbf{p}$. It suffices to show that there is a quiver in $\mut(Q_{T_{\text{can}}(\X')})$ which admits a subquiver $Q_{a,b,c}$ for $2\leq c\leq b\leq a$ and $3\leq a$. Let us label the vertices of $Q_{T_{\text{can}}(\X')}$ as in Figure \ref{f:2}.
For $\mathbf{p}=(2,3,7)$, let
\begin{eqnarray*}\mathfrak{i}&=&(\mathcal{O},\mathcal{O}(6\vec{x}_3),\mathcal{O}(\vec{c}),\mathcal{O}(2\vec{x}_3),\mathcal{O}(\vec{x}_3),\mathcal{O}(2\vec{x}_2),\mathcal{O}(6\vec{x}_3),\mathcal{O}(5\vec{x}_3),\\ &&\mathcal{O}(\vec{x}_2),\mathcal{O}(2\vec{x}_3),
\mathcal{O}(3\vec{x}_3),\mathcal{O}(2\vec{x}_2),\mathcal{O}(\vec{x}_3),\mathcal{O}(\vec{c})).
\end{eqnarray*} 
For $\mathbf{p}=(2,4,5)$, let
\begin{eqnarray*}
\mathfrak{i}&=&(\mathcal{O},\mathcal{O}(\vec{c}),\mathcal{O}(\vec{x}_3), \mathcal{O}(2\vec{x}_3),\mathcal{O}(3\vec{x}_3),\mathcal{O}(3\vec{x}_2),\\
&&\mathcal{O}(\vec{c}),\mathcal{O}(3\vec{x}_3),\mathcal{O}(4\vec{x}_3),\mathcal{O}(\vec{x}_2),\mathcal{O}(2\vec{x}_3)).
\end{eqnarray*}
For $\mathbf{p}=(3,3,4)$, let $\mathfrak{i}=(\mathcal{O},\mathcal{O}(\vec{x}_1),\mathcal{O}(\vec{x}_2),\mathcal{O}(\vec{x}_3),\mathcal{O}(\vec{c}),\mathcal{O})$. 
For $\mathbf{p}=(2,2,2,3)$, let $\mathfrak{i}=(\mathcal{O}(\vec{c}),\mathcal{O})$.  It is straightforward to check that $Q_{2,2,3}$ is a subquiver of $\mu_{\mathfrak{i}}(Q_{T_{\text{can}}(\X')})$ in each case. Finally, for $\mathbf{p}=(2,2,2,2,2)$, denote by $\mathfrak{i}=(\mathcal{O}(\vec{c}),\mathcal{O})$. We find that $Q_{2,3,5}$ is a subquiver of $\mu_{\mathcal{O}}\mu_{\mathcal{O}(\vec{c})}(Q_{T_{\text{can}}(\X')})$ in this case. This completes the proof.

\end{proof}

\section{The existence and non-existence of maximal green sequence}\label{s:proof-main-result}
This section is devoted to proving the main result of this note. We begin with the hyperbolic case. Recall that a weighted projective line $\X$ with weight sequence $(p_1,\dots, p_t)$ is of {\it hyperbolic type} if $p_1=p_2=\cdots=p_t=2$.

Let  $\X$ be of hyperbolic type.  We denote by $Q_t$ the quiver $Q_{T_{\text{sq}}(\X)}$ in this case and relabel the vertices of $Q_t$ as in Figure~\ref{f:3}. We will always identify $Q_{t}$ with a full subquiver of $Q_{t+1}$ such that the vertex $t+1$ is the unique vertex which does not belong to $Q_t$.
\begin{figure}
\begin{tikzpicture}[
    scale=2.5,axis/.style={ very thick, ->, >=stealth'},
    arrow/.style={ ->, >=stealth'},
    important line/.style={thick},
    dashed line/.style={dashed, thin},
    pile/.style={thick, ->, >=stealth', shorten <=2pt, shorten
    >=2pt},
    every node/.style={color=black}
    ]

\draw[arrow] (-0.5, 0.02)--(0.5, 0.02);
\draw[arrow] (-0.5, -0.02)--(0.5, -0.02);
\draw[arrow] (0.5, 0.02)--(0, 0.4);
\draw[arrow] (0, 0.4)--(-0.5, 0.02);
\node at (-0.55, 0){\tiny{$\diamond$}};
\node at (0.55,0){\tiny{$\star$}};
\node at (0,0.45){\tiny{$1$}};
\draw[arrow] (0.5, -0.02)--(0, -0.25);
\draw[arrow] (0, -0.25)--(-0.5, -0.02);
\node at (0,-0.15){\tiny{$2$}};
\draw[arrow] (0.5, -0.02)--(0, -0.5);
\draw[arrow] (0, -0.5)--(-0.5, -0.02);
\node at (0, -0.6){\tiny{$t$}};
\node at (0, -0.3){\tiny{$\vdots$}};

\end{tikzpicture}
\caption{Quiver $Q_{t}=Q_{T_{\text{sq}}(\X)}$ with weight sequence $(2,2,\dots, 2)$.}~\label{f:3}
\end{figure}
\begin{lemma}~\label{l:-mgs-induction}
Let ${\bf \mathfrak{i}_{t}}$ be a maximal green sequence of $Q_{t}$. Denote by $\bullet\rightrightarrows \circ$ the unique multiple arrows in $\mu_{\bf\mathfrak{i}_t}(Q_{t})$. If ${\bf \mathfrak{i}_{t+1}}:=({\bf \mathfrak{i}_{t}},\tiny{t+1},\circ,\bullet)$ is a maximal green sequence of $Q_{t+1}$, then $\circ\rightrightarrows \tiny{t+1}$ is the unique multiple arrows in $\mu_{\bf\mathfrak{i}_{t+1}}(Q_{t+1})$ and ${\bf \mathfrak{i}_{t+2}}:=( {\bf\mathfrak{i}_{t+1}}, t+2,t+1,\circ)$ is a maximal green sequence of $Q_{t+2}$.
\end{lemma}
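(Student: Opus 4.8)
The plan is to carry out both assertions as a single matrix computation on the framed quivers, arranged so that the step producing $Q_{t+2}$ is the step producing $Q_{t+1}$ read one level higher. First I would work with the $2(t+3)\times(t+3)$ matrix of $\hat Q_{t+1}$ and record a block-independence principle: because the mutation rule at a vertex $k$ only involves the entries $b_{ik}$, $b_{kj}$ and $b_{ij}$, mutating at vertices of the full subquiver $Q_t$ transforms the submatrix indexed by $Q_t$ and its frozen copy exactly as it transforms the matrix of $\hat Q_t$. Hence $\mu_{\mathfrak i_t}(\hat Q_{t+1})$ has $Q_t$-block equal to $\check Q_t$, i.e. the double arrow $\bullet\rightrightarrows\circ$, the triangles $\circ\to\sigma(i)\to\bullet$, and a coframed coefficient part. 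The same principle shows that the entries $b_{(t+1)^\ast,k}$ stay $0$ for every vertex $k$ of $Q_t$ while $b_{(t+1)^\ast,\,t+1}$ stays $1$ throughout $\mathfrak i_t$; together with the sign-coherence of Theorem~\ref{t:dwz} this forces the $c$-vector of $t+1$ to be non-negative, so $t+1$ is green after $\mathfrak i_t$ and the first new mutation is legitimate.

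Next I would determine how $t+1$ is attached after $\mathfrak i_t$ and prove it sits like a tube, $\circ\to(t+1)\to\bullet$. Granting this, the final three mutations become a short uniform computation: writing the double arrow as $X\rightrightarrows Y$, the new vertex $Z$ attached by $Y\to Z\to X$, and the surviving tubes $T_j$ by $Y\to T_j\to X$, a direct check shows that $\mu_X\mu_Y\mu_Z$ produces a unique multiple arrow, places it at $Y\rightrightarrows Z$, and leaves $X$ and the $T_j$ as tubes over the new double arrow. Specialising $(X,Y,Z)=(\bullet,\circ,t+1)$ gives assertion~(1): the unique multiple arrows of $\mu_{\mathfrak i_{t+1}}(Q_{t+1})$ are $\circ\rightrightarrows(t+1)$. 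Here the hypothesis that $\mathfrak i_{t+1}$ is a maximal green sequence is what guarantees, via Remark~\ref{r:mgs} and Proposition~\ref{p:bdp}, that the terminal state is the all-red coframed quiver $\check Q_{t+1}$, so that the computed principal part indeed records its unique double arrow.

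For assertion~(2) I would repeat the package inside $\hat Q_{t+2}$. Block-independence sends the $Q_{t+1}$-block of $\mu_{\mathfrak i_{t+1}}(\hat Q_{t+2})$ to $\check Q_{t+1}$, whose double arrow is $\circ\rightrightarrows(t+1)$ by~(1), and the same $(t+2)^\ast$-row argument makes $t+2$ green; moreover the $c$-vectors of the $Q_{t+1}$-vertices are unchanged by the presence of $t+2$, so the prefix $\mathfrak i_{t+1}$ stays green in $\hat Q_{t+2}$. Attaching $t+2$ as the tube $(t+1)\to(t+2)\to\circ$ and running $\mu_\circ\mu_{t+1}\mu_{t+2}$ is the previous computation with $(X,Y,Z)=(\circ,t+1,t+2)$, which relocates the double arrow to $(t+1)\rightrightarrows(t+2)$ and ends all red; by Remark~\ref{r:mgs} this says precisely that $\mathfrak i_{t+2}=(\mathfrak i_{t+1},t+2,t+1,\circ)$ is a maximal green sequence of $Q_{t+2}$.

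The main obstacle is the coefficient bookkeeping underlying the two facts used freely above: that the new vertex is attached as a tube, and that the second and third new mutations are green. Sign-coherence decides the colour of the first new mutation but not of the next two, since the relevant $c$-vector is a sum of a non-positive and a non-negative vector whose overall sign depends on magnitudes; and for~(2) the maximal-green hypothesis is no longer available to force the terminal quiver. I therefore expect to need an explicit self-similar normal form for the whole ice quiver $\mu_{\mathfrak i_t}(\hat Q_t)$, including all $c$-vectors, carried as a strengthened induction hypothesis, from which the tube attachment, the greenness of all three new mutations, and the reproduction of the normal form one level up can each be read off using Theorem~\ref{t:tropical-duality} and direct substitution.
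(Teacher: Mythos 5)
Your outline assembles the right toolkit (block-independence of the $Q_t$-block under mutations in $Q_t$, sign-coherence for the greenness of the first new mutation, tropical duality, and a self-similar three-mutation computation that does match what the paper computes), but it contains a genuine gap, which you yourself flag: nothing in the proposal derives, \emph{from the lemma's two stated hypotheses}, the facts you ``grant'' --- that after $\mu_{\mathfrak{i}_t}$ the vertex $t+1$ is attached as a tube $\circ\to(t+1)\to\bullet$ with \emph{no} arrows to the other tube vertices, that $t+2$ is attached analogously, and, crucially for assertion (2), what the new columns of the $C$-matrix are. Without those columns you cannot verify that the run ``ends all red,'' and, as you note, at level $t+2$ there is no maximal-green hypothesis left to force the terminal state. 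Your proposed repair --- carrying an explicit normal form of the whole ice quiver, $c$-vectors included, as a strengthened induction hypothesis --- proves a different statement: the lemma quantifies over an \emph{arbitrary} maximal green sequence $\mathfrak{i}_t$ of $Q_t$ such that $(\mathfrak{i}_t,t+1,\circ,\bullet)$ is a maximal green sequence of $Q_{t+1}$, and no normal-form assumption is available in its hypotheses. Such a strengthening could be arranged to serve the downstream Proposition~\ref{p:existenc-mgs}, but it is not a proof of this lemma.

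The paper closes exactly this gap by running the hypothesis ``$\mathfrak{i}_{t+1}$ is a maximal green sequence of $Q_{t+1}$'' \emph{backwards}, as a constraint. First, it writes the unknown attachment of $t+1$ with integer parameters $a,b$, computes $\mu_\bullet\mu_\circ\mu_{t+1}$ of the corresponding $3\times 3$ (and then $4\times 4$) submatrix, and uses that by Proposition~\ref{p:bdp} the terminal quiver $\mu_{\mathfrak{i}_{t+1}}(Q_{t+1})$ is isomorphic to $Q_{t+1}$, so the computed quiver must embed as a full subquiver of $Q_{t+1}$; a four-case sign analysis then forces $a=b=1$ (the tube) and kills all arrows between $t+1$ and the old tube vertices, with symmetry handling $t+2$. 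Second, the block $G$-matrix together with Theorem~\ref{t:tropical-duality} gives the coefficient part of $\mu_{\mathfrak{i}_t}(\hat{B}^{\circ})$ up to non-negative unknowns $a_\bullet,a_\circ,a_1,\dots,a_t$ in the two new columns, and comparing the terminal $C$-matrix of $\mathfrak{i}_{t+1}$ with a permutation of $-I$ pins these down to $a_\bullet=a_\circ=1$, $a_i=0$. Only after this constraint propagation does the explicit computation of the final mutations $t+2,t+1,\circ$ show that every step is green and that the coefficient part becomes a permutation of $-I_{t+4}$, which by Remark~\ref{r:mgs} is assertion (2). This derivation of the attachment and of the new $c$-vector columns from the assumed maximal green sequences is the missing idea in your proposal; with it, no strengthened hypothesis is needed.
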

\begin{proof}
We apply $\mu_{\bf\mathfrak{i}_{t}}$ to the quiver $Q_{t+2}$.  Since  ${\bf \mathfrak{i}_t}$ is a sequence  of vertices of $Q_t$, $\mu_{\bf \mathfrak{i}_t}(Q_{t})$ is a full subquiver of $\mu_{\bf \mathfrak{i}_t}(Q_{t+2})$.
In particular, the vertex set of $\mu_{\bf \mathfrak{i}_t}(Q_{t})$ is a subset of the vertex set of $\mu_{\bf \mathfrak{i}_t}(Q_{t+2})$. Since $\mu_{\bf \mathfrak{i}_t}(Q_{t})\cong Q_t$,  we will denote the vertex set of $\mu_{\bf \mathfrak{i}_t}(Q_{t})$ by $\{\bullet, \circ, 1,\dots, t\}$ and the vertex set of $\mu_{\bf \mathfrak{i}_t}(Q_{t+2})$ by $\{\bullet, \circ, 1,\dots, t,t+1,t+2\}$.

Let $\hat{B}=(b_{ij})\in M_{2(t+4) }(\Z)$ be the skew-symmetric matrix associated to the framed quiver $\hat{Q}_{t+2}$ and $\hat{B}^{\circ}$ the submatrix of $\hat{B}$ consisting of the first $t+4$ columns. We index the columns of $\hat{B}^{\circ}$ by $\bullet, \circ, 1,\dots, t+2$.

\noindent{\bf Claim $1$:} The principal part of $\mu_{\bf\mathfrak{i}_t}(\hat{B}^\circ)$ is 
 \[\tiny\begin{bmatrix}
 0&-2&1&\cdots&1&1&1\\
 2&0&-1&\cdots&-1&-1&-1\\
 -1&1&0&\cdots&0&0&0\\
 &\vdots&&\vdots&&\vdots&\\
 -1&1&0&\cdots&0&0&0\\
 -1&1&0&\cdots&0&0&0\\
 -1&1&0&\cdots&0&0&0
 \end{bmatrix}\in M_{t+4}(\Z).
 \]
The proof of this claim will be separated into three steps.
Here we work on the quivers.

\noindent{\bf Step $1$:}
{\it The full subquiver of $\mu_{\bf\mathfrak{i}_{t}}(Q_{t+2})$ consisting of vertices $\bullet, \circ, t+1$ and  $t+2$ has the following form:}
\begin{figure}[h]
\begin{tikzpicture}[
    scale=2,axis/.style={ very thick, ->, >=stealth'},
    arrow/.style={ ->, >=stealth'},
    important line/.style={thick},
    dashed line/.style={dashed, thin},
    pile/.style={thick, ->, >=stealth', shorten <=2pt, shorten
    >=2pt},
    every node/.style={color=black}
    ]
    
    \draw[arrow](0,0.02)--(1, 0.02);
    \draw[arrow](0,-0.02)--(1, -0.02);
    \node at(-0.05,0){\tiny{$\bullet$}};
     \node at(1.05,0){\tiny{$\circ$}};
   
 \draw[arrow](1,0.02)--(0.5, 0.5);
  \draw[arrow](0.5,0.5)--(0, 0.02);
 
   \node at(0.5,0.6){\tiny{$t+1$}};
   \node at(0.5,-0.6){\tiny{$t+2$}};
   
   \draw[arrow](1,0.02)--(0.5, -0.5);
   \draw[arrow](0.5,-0.5)--(0, 0.02);

    \end{tikzpicture}
\end{figure}

Suppose that there are $a$ arrows from vertex $t+1$ to vertex $\bullet$ and $b$ arrows from vertex $\circ$ to vertex $t+1$ and denote by $Q(a,b)$ the full subquiver consisting of vertices $\bullet, \circ$ and $t+1$. Since there is a symmetry between $t+1$ and $t+2$ in $Q_{t+2}$, it suffices to prove that $a=b=1$.

Denote by $B=\begin{bmatrix}0&-2&a\\ 2&0&-b\\ -a &b&0\end{bmatrix}$ the associated skew-symmetric matrix of $Q(a,b)$.  Denote by 
\[c_{12}=-2+\sgn(a)[ab]_+, c_{13}=a-\sgn(c_{12})[bc_{12}]_+, c_{23}=-b+\sgn(c_{12})[-c_{12}c_{13}]_+.
\]

By Fomin-Zelevinsky's matrix mutation formula, we obtain
\[C:=\mu_{\bullet}(\mu_{\circ}(\mu_{t+1}(B)))=\begin{bmatrix} 0&c_{12}& c_{13}\\ -c_{12}&0&c_{23}\\ -c_{13}&-c_{23}&0
\end{bmatrix}.
\]
Note that the associated skew-symmetric matrix of $\mu_\bullet(\mu_\circ(\mu_{t+1}(Q(a,b))))$ is the matrix $C$. By the assumption that ${\bf \mathfrak{i}_{t+1}}$ is a maximal green sequence of $Q_{t+1}$, we have $\mu_{\bf \mathfrak{i}_{t+1}}(Q_{t+1})\cong Q_{t+1}$. In particular, the quiver $\mu_\bullet(\mu_\circ(\mu_{t+1}(Q(a,b))))$ is a full subquiver of $Q_{t+1}$ via the isomorphism $\mu_{\bf \mathfrak{i}_{t+1}}(Q_{t+1})\cong Q_{t+1}$. The remaining proof is a discussion of the values of $a$ and $b$, from which we can deduce that $a=1=b$. We will denote by $Q(C)$ the associated quiver of the skew-symmetric matrix $C$.

\noindent{\it Case $1$: $a<0$, $b>0$.}  A direct computation shows that $c_{12}=-2$, $c_{13}=a$ and $c_{23}=-b$. Consequently, the associate quiver $Q(C)$ is not a full subquiver of $Q_{t+1}$.

\noindent{\it Case $2$: $a>0, b<0$.} We have $c_{12}=-2$, $c_{13}=a-2b\geq 3$, which implies that the associated quiver $Q(C)$ is not a full subquiver of $Q_{t+1}$.

\noindent{\it Case $3$: $a\leq 0, b\leq 0$.}  We have $c_{12}=-2-ab\leq -2$. Since $Q(C)$ is a full subquiver of $Q_{t+1}$, we have $c_{12}=-2$. Hence $ab=0$, i.e., $a=0$ or $b=0$. In each case, one can show that $Q(C)$ is not a full subquiver of $Q_{t+1}$.

\noindent{\it Case $4$: $a\geq 0$, $b\geq 0$.} Similar to the Case $3$, we obtain $-2\leq c_{12}=-2+ab\leq 2$. In particular, $0\leq ab\leq 4$. A direct computation shows that $a=1=b$ is the unique value such that $Q(C)$ is a full subquiver of $Q_{t+1}$.  This completes the proof for the statement in {\bf Step $1$}.

As a direct consequence of the statement of {\bf Step $1$},  the quiver $Q(C)$ has the form as in Figure~\ref{f:quiver-c}.  Consequently, $\circ\rightrightarrows t+1$ is the unique multiple arrows in $\mu_{\bf\mathfrak{i}_{t+1}}(Q_{t+1})$.

\begin{figure}[h]
\begin{tikzpicture}[
    scale=2,axis/.style={ very thick, ->, >=stealth'},
    arrow/.style={ ->, >=stealth'},
    important line/.style={thick},
    dashed line/.style={dashed, thin},
    pile/.style={thick, ->, >=stealth', shorten <=2pt, shorten
    >=2pt},
    every node/.style={color=black}
    ]
    
    \draw[arrow](0,0.02)--(1, 0.02);
    \draw[arrow](0,-0.02)--(1, -0.02);
    \node at(-0.05,0){\tiny{$\circ$}};
     \node at(1.25,0){\tiny{$t+1$}.};

 \draw[arrow](1,0.02)--(0.5, 0.5);
  \draw[arrow](0.5,0.5)--(0, 0.02);

   \node at(0.5,0.55){\tiny{$\bullet$}};
    \end{tikzpicture}
    \caption{Quiver $Q(C)$.}~\label{f:quiver-c}
\end{figure}

\noindent{\bf Step $2$:} {\it  There are no arrows between vertex $k\in \{1,\dots, t\}$ and vertex $t+1$ in the quiver $\mu_{\mathfrak{i}_t}(Q_{t+2})$.}

Without loss of generality, we may assume that there are $a$ arrows from vertex $i$ to vertex $t+1$ and  we consider the full subquiver consisting of vertices $\bullet, \circ, t+1, i$:
\begin{figure}[h]
	\begin{tikzpicture}[
	scale=2,axis/.style={ very thick, ->, >=stealth'},
	arrow/.style={ ->, >=stealth'},
	important line/.style={thick},
	dashed line/.style={dashed, thin},
	pile/.style={thick, ->, >=stealth', shorten <=2pt, shorten
		>=2pt},
	every node/.style={color=black}
	]
	
	\draw[arrow](0,0.02)--(1, 0.02);
	\draw[arrow](0,-0.02)--(1, -0.02);
	\node at(-0.05,0){\tiny{$\bullet$}};
	\node at(1.05,0){\tiny{$\circ$}.};
	\node at(0.5, -0.5){\tiny{$i$}};
	
	\draw[arrow](1,0.02)--(0.5, 0.5);
	\draw[arrow](0.5,0.5)--(0, 0.02);
	\draw[arrow](1.,0)--(0.5,-0.45);
	\draw[arrow](0.5,-0.45)--(0,-0.02);
	\draw[arrow](0.5,-0.45)--(0.5,0.45);
	
	\node at(0.55, 0.25){\tiny{$a$}};
	
	\node at(0.5,0.55){\tiny{$t+1$}};
	\end{tikzpicture}
\end{figure}

By applying the mutation sequence $t+1, \circ, \bullet$ to the above quiver, we obtain 
\begin{figure}[h]
	\begin{tikzpicture}[
	scale=2,axis/.style={ very thick, ->, >=stealth'},
	arrow/.style={ ->, >=stealth'},
	important line/.style={thick},
	dashed line/.style={dashed, thin},
	pile/.style={thick, ->, >=stealth', shorten <=2pt, shorten
		>=2pt},
	every node/.style={color=black}
	]
	
	\draw[arrow](-1,0.02)--(0, 0.02);
	\draw[arrow](-1,-0.02)--(0, -0.02);
	\node at(-1.05,0){\tiny{$\bullet$}};
	\node at(0.05,0){\tiny{$\circ$}};
	\node at(-0.5, -0.5){\tiny{$i$}};
	
	\draw[arrow](0,0.02)--(-0.5, 0.5);
	\draw[arrow](-0.5,0.5)--(-1, 0.02);
	\draw[arrow](0,0)--(-0.5,-0.45);
	\draw[arrow](-0.5,-0.45)--(-1,0);
	\draw[arrow](-0.5,-0.45)--(-0.5,0.45);
	
	\node at(-0.45, 0.25){\tiny{$a$}};
	
	\node at(-0.5,0.55){\tiny{$t+1$}};
	
	\draw[-](0.15, 0)--(0.45,0);
	\node at (0.3, 0.05){\tiny{$t+1$}};
	~~~~~~~~~~~~~~~~~~~~~~~~~~~~~~~~~~~~~~~~~~~~~~~
	\draw[arrow](0.55,0)--(1.5, 0);
	
	\node at(0.5,0){\tiny{$\bullet$}};
	\node at(1.6,0){\tiny{$\circ$}};
	\node at(1.05, -0.5){\tiny{$i$}};
	
	\draw[arrow](1.05, 0.5)--(1.55,0.02);
	\draw[arrow](0.55, 0.02)--(1.05,0.5);
	\draw[arrow](1.55,0)--(1.05,-0.45);
	\draw[arrow](1.05,-0.45)--(0.55,0);
	\draw[arrow](1.05,0.45)--(1.05,-0.4);
	
	\node at(1.1, 0.25){\tiny{$a$}};
	\node at(0.6,-0.3){\tiny{$a+1$}};
	
	\node at(1.05,0.55){\tiny{$t+1$}};
	
	\draw[-](1.7, 0)--(2,0);
	\node at (1.85, 0.05){\tiny{$\circ$}};
	~~~~~
	\draw[arrow](3.05, 0)--(2.1,0);
	
	\node at(2.05,0){\tiny{$\bullet$}};
	\node at(3.1,0){\tiny{$\circ$}};
	\node at(2.6, -0.5){\tiny{$i$}};

	\draw[arrow](3.1,0.05)--(2.6,0.5);
	\draw[arrow](2.05,0.05)--(2.6,0.5);
	\draw[arrow](2.6,0.5)--(2.6,-0.45);
	\draw[arrow](2.6,-0.45)--(2.05,-0.05);
	\draw[arrow](2.6,-0.45)--(3.1,-0.05);
	
	\node at(2.65, 0.25){\tiny{$a+1$}};
	\node at(2.15,-0.3){\tiny{$a$}};
	
	\node at(2.65,0.55){\tiny{$t+1$}};
	
	\draw[-](3.2, 0)--(3.55,0);
	\node at (3.4, 0.05){\tiny{$\bullet$}};
	~~~~~~~~~~~~~~~~~~~~
	\draw[arrow](3.65,0)--(4.6, 0);
	
	\node at(3.6,0){\tiny{$\bullet$}};
	\node at(4.65,0){\tiny{$\circ$}};
	\node at(4.1, -0.5){\tiny{$i$}};
	\node at (4.95,0){\tiny{$=: Q_{(4)}$.}};
	
	\draw[arrow](4.55,0.05)--(4.1, 0.5);
	\draw[arrow](4.6,0.05)--(4.15,0.5);
	\draw[arrow](4.1,0.5)--(3.6,0.05);
	\draw[arrow](4.1,0.5)--(4.1,-0.45);
	\draw[arrow](4.15,-0.45)--(4.65,-0.05);
	\draw[arrow](3.6,-0.05)--(4.05,-0.45);
	
	\node at(3.7,-0.3){\tiny{$a$}};
	
	\node at(4.1,0.55){\tiny{$t+1$}};
	\end{tikzpicture}
\end{figure}

By the assumption that ${\bf \mathfrak{i}_{t+1}}$ is a maximal green sequence of $Q_{t+1}$, we know that the quiver $Q_{(4)}$ is a full subquiver of $\mu_{\bf \mathfrak{i}_{t+1}}(Q_{t+1})$ and we conclude that $a=0$. This completes the proof of the statement in {\bf Step $2$}.

\noindent{\bf Step $3$:} {\it There are no arrows between vertex $k\in \{1,\dots, t+1\}$ and vertex $t+2$ in the quiver $\mu_{\mathfrak{i}_t}(Q_{t+2})$.}

Note that there is a symmetry between vertex $t+1$ and $t+2$ in the quiver $Q_{t+2}$ and the mutation sequence ${\bf \mathfrak{i}_t}$ does not involve the vertices $t+1$ and $t+2$. Then the statement of {\bf Step $3$} follows the statement of {\bf Step $2$} directly.

Now {\bf Claim $1$} is a direct consequence of the statements in {\bf Step $1, 2, 3$.}

 \noindent{\bf Claim $2$:} {\it Up to a permutation of  the rows associated to $\bullet^*, \circ^*, 1^*,\dots, t^*$, the coefficient part $C_{\bf \mathfrak{i}_t}$ of $\mu_{\bf\mathfrak{i}_t}(\hat{B}^\circ)$  has the following form:}
 \[\begin{bmatrix}
 -I_{t+2}&X\\ 0& I_2
 \end{bmatrix}
 \]
 {\it where  $X\in M_{(t+2)\times 2}(\Z)$ with non-negative entries. } 

We fix a quiver pattern of the framed quiver $\hat{Q}_{t+2}$ of $Q_{t+2}$ by assigning $\hat{Q}_{t+2}$ to the root vertex $t_0\in \mathbb{T}_{t+4}$.
Each sequence $\mathfrak{i}$ of vertices of $Q_{t+2}$ induces a path of $\mathbb{T}_{t+4}$ with starting point $t_0$. We denote by the ending point $s_{\mathfrak{i}}$ and denote by $G_{\mathfrak{i}}:=G_{s_{\mathfrak{i}}}$ (resp. $C_{\mathfrak{i}}$) the $G$-matrix (resp. $C$-matrix) at $s_{\mathfrak{i}}$.
 
 Since the sequence ${\bf \mathfrak{i}_t}$ does not involves the vertices $t+1$ and $t+2$. It follows that 
 $G_{\bf \mathfrak{i}_t}=\begin{bmatrix} A&0\\ Y&I_2
 \end{bmatrix}$,
 where $A\in M_{t+2}(\Z)$ is invertible and $Y\in M_{2\times (t+2)}(\Z)$ with non negative entries.
 By the tropical dualities between $G$-matrices and $C$-matrices (\ref{t:tropical-duality}), we have 
 \[C_{\bf \mathfrak{i}_t}=G_{\bf\mathfrak{i}_t}^{-T}=\begin{bmatrix}
 A^{-T}&-A^{-T}Y^T\\ 0&I_2
 \end{bmatrix}.
 \]
 Since ${\bf \mathfrak{i}_t}$ is a maximal green sequence, it follows that $A^{-T}$ is a permutation of $-I_{t+2}$. On the other hand,  the entries of $-A^{-T}Y^T$ are non negative by the sign-coherence of $c$-vectors. This finishes the proof of {\bf Claim $2$}.

According to {\bf Claim $1$} and {\bf Claim $2$}, up to permutation of indices, we may assume that
\[\tiny \mu_{\bf \mathfrak{i}_t}(\hat{B}^\circ)=\begin{blockarray}{cccccccc}
\text{$\bullet$}&\text{$\circ$}&\text{\color{blue}$1$}&\text{\color{blue}$\cdots$}&\text{\color{blue}$t$}&\text{\color{blue}$t+1$}&\text{\color{blue}$t+2$}&\\
\begin{block}{[ccccccc]c}
0&-2&1&\cdots&1&1&1& \text{$\bullet$}\\
2&0&-1&\cdots&-1&-1&-1& \text{$\circ$}\\
-1&1&0&\cdots&0&0&0& \text{\color{blue}$1$}\\
&\vdots&&\vdots&&\vdots& &\text{\color{blue}$\vdots$}\\
-1&1&0&\cdots&0&0&0& \text{\color{blue}$t$}\\
-1&1&0&\cdots&0&0&0& \text{\color{blue}$t+1$}\\
-1&1&0&\cdots&0&0&0&\text{\color{blue}$t+2$}\\  
-1&0&0&\cdots&0&a_{\bullet}&a_{\bullet}&\\
0&-1&0&\cdots&0&a_{\circ}&a_{\circ}&\\
0&0&-1&\cdots&0&a_1&a_1&\\
&\vdots&&\vdots&&\vdots&&\\
0&0&0&\cdots&-1&a_t&a_t&\\
0&0&0&\cdots&0&1&0&\\
0&0&0&\cdots&0&0&1&\\
\end{block}
\end{blockarray}
\]
where $\bullet, \circ, {\color{blue}1},\cdots, {\color{blue} t+2}$ are (relabelled) vertices of $\mu_{\bf\mathfrak{i}_t}(Q_{t+2})$, $a_{\bullet}, a_{\circ}, a_1,\ldots, a_t$ are non negative integers.

By Fomin-Zelevinsky's mutation rule, we obtain $\mu_{\bf \mathfrak{i}_{t+1}}(\hat{B}^\circ)$ as in Figure \ref{m:t+1}.
\begin{figure}

\begin{eqnarray*} \mu_{\bf \mathfrak{i}_{t+1}}(\hat{B}^\circ)&=&\mu_{\bullet}\circ \mu_{\circ}\circ \mu_{t+1}(\mu_{\bf \mathfrak{i}_t}(\hat{B}^{\circ}))\\
&=& \tiny\begin{blockarray}{cccccccc}
\text{$\bullet$}&\text{$\circ$}&\text{\color{blue}$1$}&\text{\color{blue}$\cdots$}&\text{\color{blue}$t$}&\text{\color{blue}$t+1$}&\text{\color{blue}$t+2$}&\\
\begin{block}{[ccccccc]c}
0&-1&0&\cdots&0&1&0& \text{$\bullet$}\\
1&0&1&\cdots&1&-2&1& \text{$\circ$}\\
0&-1&0&\cdots&0&1&0& \text{\color{blue}$1$}\\
&\vdots&&\vdots&&\vdots& &\text{\color{blue}$\vdots$}\\
0&-1&0&\cdots&0&1&0& \text{\color{blue}$t$}\\
-1&2&-1&\cdots&-1&0&-1& \text{\color{blue}$t+1$}\\
0&-1&0&\cdots&0&1&0&\text{\color{blue}$t+2$}\\  
1-a_{\bullet}&-1&0&\cdots&0&0&a_{\bullet}&\\
1-a_{\circ}&0&0&\cdots&0&-1&a_{\circ}&\\
-a_1&0&-1&\cdots&0&0&a_1&\\
&\vdots&&\vdots&&\vdots&&\\
-a_t&0&0&\cdots&-1&0&a_t&\\
-1&0&0&\cdots&0&0&0&\text{\color{blue}$t+1^*$}\\
0&0&0&\cdots&0&0&1&\text{\color{blue}$t+2^*$}\\
\end{block}
\end{blockarray}
\end{eqnarray*}
\caption{The matrix $\mu_{\bf \mathfrak{i}_{t+1}}(\hat{B}^\circ)$.}\label{m:t+1}
\end{figure}
 Note that ${\bf \mathfrak{i}_{t+1}}=( {\bf \mathfrak{i}_t},t+1, \circ,\bullet)$ is a maximal green sequence of $Q_{t+1}$. It follows that the submatirx formed by the first $t+3$ row indices and the first $t+3$ column indices of the coefficient part of $\mu_{\bf \mathfrak{i}_{t+1}}(\hat{B}^\circ)$ is a permutation of $-I_{t+1}$.
Hence we have 
\[a_\bullet=1, a_\circ=1, a_1=0,\ldots, a_t=0.
\]
Finally, we apply the mutation sequence $\circ, t+1, t+2$ to the matrix $\mu_{\bf \mathfrak{i}_{t+1}}(\hat{B}^\circ)$, we compute the matrix $\mu_{\bf \mathfrak{i}_{t+2}}(\hat{B}^\circ)$  as in Figure \ref{m:t+2}.
\begin{figure}
\[\xymatrix{{\tiny \mu_{\bf \mathfrak{i}_{t+1}}(\hat{B}^\circ)= \begin{blockarray}{cccccccc}
\text{$\bullet$}&\text{$\circ$}&\text{\color{blue}$1$}&\text{\color{blue}$\cdots$}&\text{\color{blue}$t$}&\text{\color{blue}$t+1$}&\text{\color{blue}$t+2$}&\\
\begin{block}{[ccccccc]c}
0&-1&0&\cdots&0&1&0& \text{$\bullet$}\\
1&0&1&\cdots&1&-2&1& \text{$\circ$}\\
0&-1&0&\cdots&0&1&0& \text{\color{blue}$1$}\\
&\vdots&&\vdots&&\vdots& &\text{\color{blue}$\vdots$}\\
0&-1&0&\cdots&0&1&0& \text{\color{blue}$t$}\\
-1&2&-1&\cdots&-1&0&-1& \text{\color{blue}$t+1$}\\
0&-1&0&\cdots&0&1&0&\text{\color{blue}$t+2$}\\  
0&-1&0&\cdots&0&0&1&\\
0&0&0&\cdots&0&-1&1&\\
0&0&-1&\cdots&0&0&0&\\
&\vdots&&\vdots&&\vdots&&\\
0&0&0&\cdots&-1&0&0&\\
-1&0&0&\cdots&0&0&0&\text{\color{blue}$t+1^*$}\\
0&0&0&\cdots&0&0&1&\text{\color{blue}$t+2^*$}\\
\end{block}
\end{blockarray}}\ar[r]^{\quad\ \ \ ~~~t+2}&{\tiny\begin{blockarray}{cccccccc}
\text{$\bullet$}&\text{$\circ$}&\text{\color{blue}$1$}&\text{\color{blue}$\cdots$}&\text{\color{blue}$t$}&\text{\color{blue}$t+1$}&\text{\color{blue}$t+2$}&\\
\begin{block}{[ccccccc]c}
0&-1&0&\cdots&0&1&0& \text{$\bullet$}\\
1&0&1&\cdots&1&-1&-1& \text{$\circ$}\\
0&-1&0&\cdots&0&1&0& \text{\color{blue}$1$}\\
&\vdots&&\vdots&&\vdots& &\text{\color{blue}$\vdots$}\\
0&-1&0&\cdots&0&1&0& \text{\color{blue}$t$}\\
-1&1&-1&\cdots&-1&0&1& \text{\color{blue}$t+1$}\\
0&1&0&\cdots&0&-1&0&\text{\color{blue}$t+2$}\\  
0&-1&0&\cdots&0&1&-1&\\
0&0&0&\cdots&0&0&-1&\\
0&0&-1&\cdots&0&0&0&\\
&\vdots&&\vdots&&\vdots&&\\
0&0&0&\cdots&-1&0&0&\\
-1&0&0&\cdots&0&0&0&\text{\color{blue}$t+1^*$}\\
0&0&0&\cdots&0&1&-1&\text{\color{blue}$t+2^*$}\\
\end{block}
\end{blockarray}}\ar[d]^{t+1}\\ \mu_{\bf \mathfrak{i}_{t+2}}(\hat{B}^{\circ})={\tiny\begin{blockarray}{cccccccc}
\text{$\bullet$}&\text{$\circ$}&\text{\color{blue}$1$}&\text{\color{blue}$\cdots$}&\text{\color{blue}$t$}&\text{\color{blue}$t+1$}&\text{\color{blue}$t+2$}&\\
\begin{block}{[ccccccc]c}
0&0&0&\cdots&0&-1&1& \text{$\bullet$}\\
0&0&0&\cdots&0&-1&1& \text{$\circ$}\\
0&0&0&\cdots&0&-1&1& \text{\color{blue}$1$}\\
&\vdots&&\vdots&&\vdots& &\text{\color{blue}$\vdots$}\\
0&0&0&\cdots&0&-1&1& \text{\color{blue}$t$}\\
1&1&1&\cdots&1&0&-2& \text{\color{blue}$t+1$}\\
-1&-1&-1&\cdots&-1&2&0&\text{\color{blue}$t+2$}\\  
0&0&0&\cdots&0&-1&0&\\
0&0&0&\cdots&0&0&-1&\\
0&0&-1&\cdots&0&0&0&\\
&\vdots&&\vdots&&\vdots&&\\
0&0&0&\cdots&-1&0&0&\\
-1&0&0&\cdots&0&0&0&\text{\color{blue}$t+1^*$}\\
0&-1&0&\cdots&0&0&0&\text{\color{blue}$t+2^*$}\\
\end{block}
\end{blockarray}} &{\tiny\begin{blockarray}{cccccccc}
\text{$\bullet$}&\text{$\circ$}&\text{\color{blue}$1$}&\text{\color{blue}$\cdots$}&\text{\color{blue}$t$}&\text{\color{blue}$t+1$}&\text{\color{blue}$t+2$}&\\
\begin{block}{[ccccccc]c}
0&0&0&\cdots&0&-1&1& \text{$\bullet$}\\
0&0&0&\cdots&0&1&-1& \text{$\circ$}\\
0&0&0&\cdots&0&-1&1& \text{\color{blue}$1$}\\
&\vdots&&\vdots&&\vdots& &\text{\color{blue}$\vdots$}\\
0&0&0&\cdots&0&-1&1& \text{\color{blue}$t$}\\
1&-1&1&\cdots&1&0&-1& \text{\color{blue}$t+1$}\\
-1&1&-1&\cdots&-1&1&0&\text{\color{blue}$t+2$}\\  
0&0&0&\cdots&0&-1&0&\\
0&0&0&\cdots&0&0&-1&\\
0&0&-1&\cdots&0&0&0&\\
&\vdots&&\vdots&&\vdots&&\\
0&0&0&\cdots&-1&0&0&\\
-1&0&0&\cdots&0&0&0&\text{\color{blue}$t+1^*$}\\
0&1&0&\cdots&0&-1&0&\text{\color{blue}$t+2^*$}\\
\end{block}
\end{blockarray}}\ar[l]_{\ \ \ \ \circ}
}
\]
\caption{The matrix  $\mu_{\bf \mathfrak{i}_{t+2}}(\hat{B}^\circ)$.}\label{m:t+2}
\end{figure}
Note that the coefficient part of the matrix $\mu_{\bf \mathfrak{i}_{t+2}}(\hat{B}^\circ)$ is a permutation of $-I_{t+4}$. According to Remark~\ref{r:mgs}, we conclude that ${\bf \mathfrak{i}_{t+2}}$ is  a maximal green sequence of $Q_{t+2}$. This completes the proof of the lemma.

\end{proof}

\begin{proposition}\label{p:existenc-mgs}
Assume that $t\geq 3$.
The quiver $Q_{T_{\text{sq}}(\X)}$ admits a maximal green sequence.
\end{proposition}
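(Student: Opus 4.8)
The plan is to induct on $t$, with Lemma~\ref{l:-mgs-induction} serving as the inductive step. For $t\geq 3$ let $P(t)$ be the following assertion: $Q_t$ admits a maximal green sequence $\mathfrak{i}_t$ such that, writing $\bullet\rightrightarrows\circ$ for the unique multiple arrow of the quiver $\mu_{\mathfrak{i}_t}(Q_t)\cong Q_t$, the sequence $(\mathfrak{i}_t,t+1,\circ,\bullet)$ is a maximal green sequence of $Q_{t+1}$. I build the condition on $Q_{t+1}$ into $P(t)$ precisely so that Lemma~\ref{l:-mgs-induction} yields the implication $P(t)\Rightarrow P(t+1)$: the two hypotheses of the lemma are exactly the clauses of $P(t)$, while its conclusion simultaneously produces the maximal green sequence $\mathfrak{i}_{t+2}=(\mathfrak{i}_{t+1},t+2,t+1,\circ)$ of $Q_{t+2}$ and identifies $\circ\rightrightarrows(t+1)$ as the unique multiple arrow of $\mu_{\mathfrak{i}_{t+1}}(Q_{t+1})$ --- which together are exactly $P(t+1)$. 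Since $P(t)$ in particular says that $Q_t$ has a maximal green sequence, the proposition will follow once the base case $P(3)$ is in hand.

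To settle $P(3)$ I would first exhibit an explicit maximal green sequence $\mathfrak{i}_3$ of $Q_3$ (the five-vertex quiver of Figure~\ref{f:3} with $t=3$), read off the unique double arrow $\bullet\rightrightarrows\circ$ of $\mu_{\mathfrak{i}_3}(Q_3)$, and then check that its canonical extension $(\mathfrak{i}_3,4,\circ,\bullet)$ is a maximal green sequence of $Q_4$. Both checks are finite computations on framed quivers: by Remark~\ref{r:mgs} it suffices to mutate the matrices $B(\hat{Q}_3,\cdot)$ and $B(\hat{Q}_4,\cdot)$ along the prescribed sequences, confirm that at each step the column about to be mutated has positive coefficient part, and confirm that the final coefficient part is a permutation of $-I$. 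This can be carried out directly from Fomin--Zelevinsky's matrix mutation rule, or verified with Keller's applet~\cite{Kellerapp}.

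The entire difficulty is concentrated in this base case, since $\mathfrak{i}_3$ must be chosen to satisfy both requirements at once: it must be maximal green for $Q_3$ and its extension must be maximal green for $Q_4$. Because $Q_3$ carries a double arrow, $2$-cycles are created and cancelled along the mutation path and the $c$-vectors may grow before collapsing onto $-I$, so the order of mutations must be selected with care (mutating the two ``hub'' vertices $\diamond,\star$ naively produces a non-terminating green sequence). Once a valid $\mathfrak{i}_3$ is produced, however, the induction is purely formal: all of its content is already carried by Lemma~\ref{l:-mgs-induction}, and no further analysis of the growing quivers $Q_t$ is required.
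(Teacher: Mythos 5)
Your induction is exactly the paper's treatment of the hyperbolic case: the paper also strengthens the inductive hypothesis to precisely your two-clause $P(t)$, uses Lemma~\ref{l:-mgs-induction} as the inductive step, and anchors the induction at $t=3$. The genuine gap is one of scope. Proposition~\ref{p:existenc-mgs} is not restricted to hyperbolic $\X$: it is invoked in the proof of Theorem~\ref{t:main-result} for an \emph{arbitrary} weighted projective line with $t\geq 3$ weights, and for a general weight sequence $(p_1,\dots,p_t)$ the quiver $Q_{T_{\text{sq}}(\X)}$ of Figure~\ref{f:1} strictly contains $Q_t$: besides the hyperbolic core on the vertices $\mathcal{O}$, $\mathcal{O}(\vec{c})$, $S_1^{[p_1-1]},\dots,S_t^{[p_t-1]}$, it carries the linear tails $S_i^{[p_i-2]}\to\cdots\to S_i^{[1]}$. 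Your argument therefore proves the statement only when every $p_i=2$. The paper's proof opens by disposing of exactly this point: all arrows between the core and the tails go from the core into the tails, so $Q_{T_{\text{sq}}(\X)}$ is a triangular extension of $Q_t$ by a disjoint union of linearly oriented type-$A$ quivers; the latter are acyclic, hence admit maximal green sequences, and Lemma~\ref{l:CL-lemma} (Cao--Li) then reduces the proposition to the hyperbolic quivers $Q_t$. This reduction is a necessary step and is entirely absent from your proposal.

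A second, lesser gap: the base case $P(3)$ is never actually established. You rightly observe that all the difficulty is concentrated there --- one must find a single sequence $\mathfrak{i}_3$ that is simultaneously maximal green for $Q_3$ and whose canonical extension is maximal green for $Q_4$ --- but you only describe how such a sequence \emph{would} be verified, without producing one; existence is the very content of the base case. The paper exhibits the data explicitly: $\mathfrak{i}_3=(\diamond,1,2,\diamond,\star,3,2,1,\star,\diamond)$ is maximal green for $Q_3$, the unique multiple arrow of $\mu_{\mathfrak{i}_3}(Q_3)$ is $\star\rightrightarrows 3$, and $\mathfrak{i}_4=(\mathfrak{i}_3,4,3,\star)$ is maximal green for $Q_4$. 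With this explicit base case and the triangular-extension reduction added, your induction (which coincides with the paper's) does close the proof.
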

\begin{proof}
According to Lemma~\ref{l:CL-lemma}, it suffices to show that the quiver $Q_t$ admits a maximal green sequence for $t\geq 3$. We label the vertices of $Q_t$ as in Figure~\ref{f:3}. It is straightforward to check that $\mathfrak{i}_3=(\diamond, 1,2,\diamond, \star, 3,2,1,\star, \diamond)$ is a maximal green sequence for $Q_3$ and $\star\rightrightarrows 3$ is the unique multiple arrows of $\mu_{\mathfrak{i}_3}(Q_3)$. Furthermore, $\mathfrak{i}_4:=(\mathfrak{i}_3, 4,3,\star)$ is a maximal green sequence of $Q_4$. Now the result follows from Lemma~\ref{l:-mgs-induction}.
\end{proof}

\begin{theorem}\label{t:main-result}
Let $\X$ be a weighted projective line.
\begin{itemize}
\item[(1)] There is a quiver $Q'$ in $\mut(Q_{T_{\text{can}}(\X)})$ such that $Q'$ admits a maximal green sequence.
\item[(2)] There is a quiver $Q''$ in $\mut(Q_{T_{\text{can}}(\X)})$ such that $Q''$ does not admit a maximal green sequence if and only if $\X$ is of wild type.
\end{itemize}
\end{theorem}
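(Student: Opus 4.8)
The plan is to reduce the whole statement to the squid quiver $Q_{T_{\text{sq}}(\X)}$ and then argue along the trichotomy domestic/tubular/wild. By Proposition \ref{p:mutation-equivalent}(1) one has $\mut(Q_{T_{\text{can}}(\X)})=\mut(Q_{T_{\text{sq}}(\X)})$, so having, or lacking, a maximal green sequence somewhere in the class is the same question for both starting quivers; I work throughout with $Q_{T_{\text{sq}}(\X)}$ and its weight sequence $(p_1,\dots,p_t)$.

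For part (1) I would produce an explicit good quiver. Reading Figure \ref{f:1}, the quiver $Q_{T_{\text{sq}}(\X)}$ is the triangular extension of its \emph{core} --- the full subquiver on $\mathcal O,\mathcal O(\vec c)$ and the top vertices $S_1^{[p_1-1]},\dots,S_t^{[p_t-1]}$, which is exactly the hyperbolic quiver $Q_t$ of Figure \ref{f:3} --- by the disjoint union of the linear tails $S_i^{[p_i-1]}\to\cdots\to S_i^{[1]}$, each of Dynkin type $A$; indeed every arrow joining core and tails points out of the core. For $t\ge 3$ the core has a maximal green sequence by Proposition \ref{p:existenc-mgs}, every type-$A$ (hence acyclic) quiver has one, and Lemma \ref{l:CL-lemma} applied repeatedly then shows $Q_{T_{\text{sq}}(\X)}$ itself has one, so I take $Q'=Q_{T_{\text{sq}}(\X)}$. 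The case $t=2$ is always domestic, where $\coh\X$ is derived equivalent to a tame hereditary algebra; then $\mut(Q_{T_{\text{sq}}(\X)})$ contains an acyclic quiver of extended Dynkin type, which admits a maximal green sequence, and I take $Q'$ to be that quiver.

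For the ``if'' direction of part (2) I would reuse the construction inside the proof of Proposition \ref{p:finite-mutation-wpl}. When $\X$ is wild, that proof exhibits, after a mutation sequence supported on the vertices of a canonical subquiver $Q_{T_{\text{can}}(\X')}$, a full subquiver of shape $Q_{a,b,c}$ with $2\le c\le b\le a$ and $a\ge 3$. Since a full subquiver evolves by itself under mutations supported on its own vertices, the same sequence applied to the ambient quiver yields a $Q''\in\mut(Q_{T_{\text{sq}}(\X)})$ containing a full subquiver $Q_{a,b,c}$ with $a,b,c\ge 2$; by Lemma \ref{l:muller-existence} this $Q''$ has no maximal green sequence.

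The crux is the ``only if'' direction: when $\X$ is domestic or tubular, \emph{every} quiver in $\mut(Q_{T_{\text{can}}(\X)})$ must admit a maximal green sequence. Because existence of a maximal green sequence is not mutation-invariant, it does not suffice to find one good quiver --- one must control the entire class. By Proposition \ref{p:finite-mutation-wpl} this class is \emph{finite} here, which makes the assertion checkable in principle. In the domestic case the class is the mutation class of an acyclic quiver of extended Dynkin (affine) type, for which every mutation-equivalent quiver is known to admit a maximal green sequence. In the tubular case the class is one of the four finite mutation classes of $D_4^{(1,1)},E_6^{(1,1)},E_7^{(1,1)},E_8^{(1,1)}$ (Figure \ref{f:quiver-tubular-type}), having $6,8,9,10$ vertices respectively; here I would invoke the classification of maximal-green-sequence existence among finite-mutation-type quivers --- whose only classes without one are that of the Markov quiver $Q_{2,2,2}$ (three vertices) and that of the exceptional quiver $X_7$ (seven vertices) --- and note that none of the four tubular classes can meet either, already by vertex count. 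Carrying out this last step rigorously, i.e.\ certifying that no quiver in a domestic or tubular class can ever degenerate into a $Q_{a,b,c}$-type (or $X_7$-type) obstruction whereas a wild class always can, is where the real work and the dependence on deep finite-mutation-type input lies; for the four tubular classes it can alternatively be settled by the same finite computation underlying Lemma \ref{l:finite-mutatin-type}(3).
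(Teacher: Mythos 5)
Your architecture coincides with the paper's proof: part (1) via Proposition \ref{p:mutation-equivalent} and Proposition \ref{p:existenc-mgs} (whose own proof in the paper uses exactly your triangular-extension decomposition of $Q_{T_{\text{sq}}(\X)}$ into the hyperbolic core $Q_t$ and type-$A$ tails, via Lemma \ref{l:CL-lemma}); the ``wild $\Rightarrow$ bad quiver'' half of (2) by re-running the mutation sequences from the proof of Proposition \ref{p:finite-mutation-wpl} and applying Lemma \ref{l:muller-existence}; and the ``non-wild $\Rightarrow$ every quiver is good'' half by Proposition \ref{p:finite-mutation-wpl} together with Mills' theorem on finite mutation type. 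Your handling of $t=2$ (domestic, hence an acyclic affine quiver in the class) is a harmless variant of the paper's remark that $Q_{T_{\text{can}}(\X)}$ is itself acyclic when $t=2$.

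The gap is in your last step. Mills' classification is misquoted: the finite-mutation-type classes admitting no maximal green sequence are not only the Markov class $Q_{2,2,2}$ and the $X_7$ class, but also the class of every once-punctured \emph{closed} surface of genus $g\geq 1$; such a class has $6g-3$ vertices, so the exceptional vertex counts are $3,7,9,15,\dots$, not just $3$ and $7$. Consequently your ``already by vertex count'' argument fails precisely for $E_7^{(1,1)}$ (tubular type $(2,4,4)$, nine vertices), which cannot be distinguished by counting from the once-punctured genus-$2$ class, also on nine vertices. To close this you need the Felikson--Shapiro--Tumarkin classification \cite{FST}: $D_4^{(1,1)}$ is the class of the four-times-punctured sphere and $E_6^{(1,1)}, E_7^{(1,1)}, E_8^{(1,1)}$ are exceptional non-surface classes, so none of the four tubular classes is a once-punctured closed surface class or $X_7$, and Mills' positive result applies (the paper sidesteps the issue by invoking Mills' main result for these classes wholesale). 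Your proposed fallback --- ``the same finite computation underlying Lemma \ref{l:finite-mutatin-type}(3)'' --- also does not do the job as stated: that computation certifies finiteness of the mutation class, whereas here one must additionally exhibit a maximal green sequence for each of its finitely many members, a different (if in principle feasible) verification.
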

\begin{proof}
	If $t=2$, then $Q_{T_{\text{can}}(\X)}$ is an acyclic quiver. Hence $Q_{T_{\text{can}}(\X)}$ admits a maximal green sequce (cf. \cite{BDP} for instance). Now assume that $t\geq 3$.
According to Proposition \ref{p:mutation-equivalent}, we know that $Q_{T_{\text{sq}}(\mathbb{X})}$ belongs to $\mut(Q_{T_{\text{can}}(\X)})$. Consequently, the first statement follows from Proposition~\ref{p:existenc-mgs} directly. 

The ``only if" part of $(2)$ follows from the main result of \cite{Mills}. Namely, let us assume that $\X$ is not of wild type, then $Q_{T_{\text{sq}}(\X)}$ is of finite mutation type by Proposition \ref{p:finite-mutation-wpl}. According to the main result of \cite{Mills}, each quiver in $\mut(Q_{T_{\text{sq}}(\X)})$ admits a maximal green sequence.  To prove the ``if" part, we use Lemma \ref{l:muller-existence}.
Let $\X$ be a weighted projective line of wild type. Similar to the proof of the ``only if" of Proposition \ref{p:finite-mutation-wpl}, we conclude that there is a quiver $Q$ in $\mut(Q_{T_{\text{can}}(\X)})$ such that $Q_{a,b,c}$ is a full subquiver of $Q$, where $2\leq c\leq b\leq a$ and $3\leq a$. Consequently, $Q$ does not admit a maximal green sequence by Lemma \ref{l:muller-existence}.
\end{proof}


\def\cprime{$'$} \def\cprime{$'$}
\providecommand{\bysame}{\leavevmode\hbox to3em{\hrulefill}\thinspace}
\providecommand{\MR}{\relax\ifhmode\unskip\space\fi MR }
\providecommand{\MRhref}[2]{%
  \href{http://www.ams.org/mathscinet-getitem?mr=#1}{#2}
}
\providecommand{\href}[2]{#2}
 
\end{document}